\newtheorem{corollary}{Corollary}
\newtheorem{definition}{Definition}
\newtheorem{lemma}{Lemma}
\newtheorem{proposition}{Proposition}
\newtheorem{remark}{Remark}
\newtheorem{theorem}{Theorem}
\newtheorem{example}{Example}
\numberwithin{equation}{section}
\title[Critical metrics of the volume functional]{Critical metrics of the volume functional\\ on complete manifolds}
\author{Caio Coimbra}
\author{Rafael Di\'ogenes}
\author{Ernani Ribeiro Jr}
			\address[C. Coimbra]{Universidade Federal do Cear\'a - UFC, Departamento  de Matem\'atica, Campus do Pici, Av. Humberto Monte, Bloco 914, 60455-760, Fortaleza - CE, Brazil}
	\email{caioadler@alu.ufc.br}
		\address[R. Di\'ogenes]{UNILAB, Instituto de Ci\^encias Exatas e da Natureza, Rua Jos\'e Franco de Oliveira, s/n, 62790-970, Reden\c{c}\~ao - CE, Brazil}\email{rafaeldiogenes@unilab.edu.br}
		\address[E. Ribeiro]{Universidade Federal do Cear\'a - UFC, Departamento  de Matem\'atica, Campus do Pici, Av. Humberto Monte, Bloco 914, 60455-760, Fortaleza - CE, Brazil}
		\email{ernani@mat.ufc.br} 
\thanks{C. Coimbra was partially supported by CAPES/Brazil - Finance Code 001}
	\thanks{E. Ribeiro Jr was partially supported by CNPq/Brazil Grant 305128/2025-6 and FUNCAP/Brazil Grant ITR-0214-00116.01.00/23.}
\thanks{R. Di\'ogenes was partially supported by CNPq/Brazil Grant 305731/2024-6}
\keywords{Critical metrics; volume functional; parallel Ricci curvature; Bach-flat manifolds.}
\subjclass[2020]{Primary 53C20, 53C25; Secondary 53C65.}
\date{\today}
\begin{document}

\begin{abstract}
In this article, we investigate critical metrics of the volume functional on complete manifolds without boundary. We  prove that any cri\-ti\-cal metric of the volume functional on a connected, complete manifold with parallel Ricci tensor is isometric to one of the standard models. Moreover, we show that a Bach-flat cri\-ti\-cal metric of the volume functional on a complete, simply connected manifold with proper potential function is isometric to one of the following: the standard sphere $\mathbb{S}^n$, Euclidean space $\mathbb{R}^n$, hyperbolic space $\mathbb{H}^n$, or a warped product $\mathbb{R} \times_{\varphi} \Sigma_c$, where $\Sigma_c$ is a regular level set of the potential function. In particular, we establish classification results in dimensions three and four under weaker assumptions on the Bach tensor.
\end{abstract}

\maketitle
\section{Introduction}
\label{intro}

A classical approach to studying canonical metrics on a smooth manifold is to analyze the critical points of geometric functionals under appropriate constraints. 
From the seminal works of Einstein and Hilbert, it is well known that the critical points of the total scalar curvature functional, when restricted to the set of Riemannian metrics $g$ on a compact manifold $M^n$ with unit volume, are precisely the Einstein metrics; that is, those satisfying $Ric = \lambda g$, where $\lambda$ is a constant and $Ric$ denotes the Ricci curvature of $(M^n,\,g)$; see \cite[Theorem 4.21]{besse2007einstein}. In the same spirit, it follows from Besse \cite[p. 127–128]{besse2007einstein} that the Euler--Lagrange equation of the total scalar curvature functional, when restricted to metrics of unit volume and constant scalar curvature, takes the form $
\mathcal{L}_g^{*}(f)=-(\Delta f)g+\nabla^2 f-fRic=Ric-\frac{R}{n}g,$ where $\mathcal{L}_g^{*}$ is the formal $L^2$-adjoint of the linearization of the scalar curvature operator $\mathfrak{L}_g,$ $f$ is a smooth function on $M^n$ and $R$ stands for the scalar curvature of $(M^n,\,g).$ Here, $\Delta$ and $\nabla^2$ denote the Laplacian and Hessian, respectively. These foundational results have inspired a rich theory involving other functionals and geometric constraints.

In a related direction, and motivated by a volume comparison theorem due to Fan, Shi, and Tam \cite{fan2007large}, Miao and Tam \cites{miao2011einstein,miao2009volume} initiated the study of critical metrics of the volume functional constrained to the space of metrics with constant scalar curvature on compact manifolds with boundary. Subsequently, Corvino, Eichmair, and Miao \cite{corv2013def} employed this framework to establish a deformation result, which indicates that scalar curvature alone is not sufficient to  establish a volume comparison result. In particular, their deformation result implies that Schoen’s conjecture cannot be directly extended to manifolds with boundary under only Dirichlet boundary conditions (see also \cite{yuan}). Recall that \textit{Schoen’s conjecture}\footnote{It has been confirmed in dimension three by the works of Hamilton \cite{Hamilton} and Perelman \cite{Perelman}, and also for metrics $C^2$-close to the hyperbolic metric, through results by Besson--Courtois--Gallot \cite{Besson1, Besson2}.} \cite{Schoen2} asserts: {\it Let $(M^n,\,\overline{g})$ be a closed hyperbolic manifold and let $g$ be another metric on $M^n$ with scalar curvature $R(g) \geq R(\overline{g}),$ then $Vol(g) \geq Vol(\overline{g})$}. It follows from the work of Corvino, Eichmair and Miao \cite{corv2013def} that, for a given constant $\kappa,$ if a metric $g$ does not admit a nontrivial solution $f$ to the equation
\begin{equation}
\label{eqVS}
\mathcal{L}_g^{*}(f) =-(\Delta f)g + \nabla^2 f-fRic= \kappa g,
\end{equation} then it is possible to simultaneously prescribe a compactly supported perturbation of the scalar curvature within a bounded domain $\Omega$, and a prescribed perturbation of the volume, via a small deformation of the metric supported in $\overline{\Omega}.$ Despite that, scalar curvature and volume comparison results do hold for certain special metrics. For instance, motivated by the work of Brendle, Marques and Neves \cite{BMN} on Min-Oo's conjecture, Miao and Tam \cite{MTCAG} proved a rigidity theorem for the upper hemisphere with respect to nondecreasing scalar curvature and volume. They also showed that an analogous result holds for Euclidean domains. In particular, these spaces satisfy equation~\eqref{eqVS}. More recently, Yuan \cite[Theorem A]{yuan} established a volume comparison result for small geodesic balls under appropriate boundary conditions.

Before proceeding, we fix the following terminology (cf. \cite{corv2013def,miao2011einstein,miao2009volume}).

\begin{definition}
\label{defstatic}
    Let $(M^n,\,g)$ be a complete Riemannian manifold. We say that $g$ is a $V$-static metric if there exists a constant $\kappa$ and a non-constant smooth function $f$ satisfying (\ref{eqVS}). In this case, $f$ is a $V$-static potential. 
\end{definition} 

$V$-static metrics are critical points of the volume functional restricted to the space of metrics with prescribed constant scalar curvature and fixed boundary me\-tric; see \cite[Theorem 2.3]{corv2013def} and \cite[Theorem 2.1]{miao2009volume}. Moreover, as noted by McCormick \cite{McCormick}, they arise in the study of asymptotically hyperbolic manifolds as critical points of the volume-renormalized mass. Observe that when $\kappa = 0$, equation~\eqref{eqVS} reduces to the static vacuum equation, relevant in general relativity (see \cite{ambrozio2017static,corvino2000scalar}). Furthermore, when $f$ is constant, the equation forces the metric to be Einstein. Hence, $V$-static metrics generalize Einstein metrics in a natural way. Interestingly, it follows from \cite[Proposition 2.1]{corv2013def} and \cite[Theorem 7]{miao2009volume} that any connected Riemannian manifold satisfying \eqref{eqVS} must have constant scalar curvature.

There are explicit examples of $V$-static metrics on both compact and noncompact manifolds. These include the Schwarzschild and AdS-Schwarzschild metrics restricted to suitable domains, as well as the standard metrics on geodesic balls in $\mathbb{R}^n$, $\mathbb{H}^n$, and $\mathbb{S}^n$ (cf. \cites{corv2013def,miao2011einstein,miao2009volume}). The classification of $V$-static metrics plays a central role in understanding the interplay between scalar curvature and volume. As noted in \cite{corv2013def,miao2011einstein,miao2009volume}, $V$-static metrics tend to exhibit strong rigidity properties. In recent years, several rigidity results have been established, especially in the compact with boun\-da\-ry case; see, for instance, \cite{barros2015bach,BDR21,baltazar2017critical,BDRIJM,Allan2,corv2013def,DPR,FY,He,KS,miao2011einstein,miao2009volume,SW,yuan}.

In this article, we focus on rigidity phenomena for $V$-static metrics $(M^n,\,g,\,f)$ on complete manifolds without boundary. A key result in this setting is due to Miao and Tam \cite[Theorem 2.2]{miao2011einstein}, who showed that if a $V$-static metric ($\kappa \neq 0$) on a connected, complete manifold without boundary is Einstein, then $(M^n,\,g)$ must be isometric to one of the following:

\begin{itemize}
\item the standard sphere $\mathbb{S}^n,$ 
\item the Euclidean space $\mathbb{R}^n,$ 
\item the hyperbolic space $\mathbb{H}^n,$ or
\item a warped product space $(\mathbb{R}\times \Sigma^{n-1},\,dt^2+ \cosh^2 t g_0)$, where $(\Sigma^{n-1},g_0)$ is complete, Einstein with $Ric_{g_0} = -(n-2)g_0$, and the solution is $f(t,x) = A\,\sinh t + \frac{1}{n-1}$, with $A>0$.
\end{itemize} The result was proved for $\kappa=1,$ but, up to a rescaling of $f,$ it also holds to general $V$-static metrics ($\kappa\neq 0$). We note that the case of connected, compact manifolds with boundary was treated in \cite[Theorem 1.1]{miao2011einstein}.

Since manifolds with parallel Ricci tensor have harmonic curvature (although the converse does not generally hold; see \cite{Derd}), it is natural to ask whether the Einstein condition assumed in \cite[Theorem 2.2]{miao2011einstein} can be relaxed to the weaker assumption of parallel Ricci tensor, that is, $\nabla Ric =0$. This question was answered affirmatively in the case of compact manifolds with boundary by Baltazar--Ribeiro \cite{baltazar2017critical}, who proved that if a critical metric of the volume functional on an $n$-dimensional compact, connected manifold with boundary has parallel Ricci tensor, then the manifold is isometric to a geodesic ball in a simply connected space form $\mathbb{R}^n$, $\mathbb{H}^n$, or $\mathbb{S}^n$. Notably, this result excludes the \textit{Nariai space}—the manifold $I \times \mathbb{S}^n$ with the product metric—as a $V$-static space with $\kappa \neq 0$, even though it is a static vacuum space (i.e., $\kappa = 0$); see \cite{ambrozio2017static,Nariai}.

Our first main result addresses the aforementioned question in the setting of complete manifolds without boundary. More precisely, we prove the following result.

    \begin{theorem}\label{ThmA}
        Let $(M^n,\,g,\,f)$ be an $n$-dimensional connected, complete $V$-static metric with $\kappa \neq 0$ and parallel Ricci tensor. Then $(M^n,\,g)$ is isometric to either 
\begin{enumerate}
\item the standard sphere $\mathbb{S}^n,$ or 
\item the Euclidean space $\mathbb{R}^n,$ or 
\item the hyperbolic space $\mathbb{H}^n$ when $\nabla f(p)=0$ for some $p \in M,$ or  
\item a warped product space $(\mathbb{R}\times\Sigma^{n-1},\,dt^2+\cosh^2 t g_0)$, where $(\Sigma^{n-1},g_0)$ is complete, Einstein with $Ric_{g_0} = -(n-2)g_0$, $f(t,x) =\kappa\left( A\,\sinh t + \frac{1}{n-1}\right)$, and $A>0$ is constant.
            \end{enumerate}
            \end{theorem}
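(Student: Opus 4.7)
The plan is to show that parallel Ricci tensor together with the $V$-static equation and $\kappa\neq 0$ forces $(M^n, g)$ to be Einstein, and then to invoke Miao and Tam's Einstein classification (Theorem 2.2 in \cite{miao2011einstein}) to obtain the four listed models. First I would rewrite \eqref{eqVS} in a more convenient form: tracing gives $\Delta f = -(fR + n\kappa)/(n-1)$, and substituting back and extracting the traceless parts produces the key identity
\[
\hess = f\,\ric.
\]
Since any solution of \eqref{eqVS} has constant scalar curvature, $\nabla Ric = 0$ implies $\nabla\ric = 0$, so $|\ric|^2$ is a nonnegative constant on $M$.

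The heart of the argument is to show that this constant is zero. Suppose for contradiction $|\ric|^2 > 0$. Then $Ric$ has at least two distinct constant eigenvalues $\lambda\neq\mu$ with constant multiplicities and parallel eigendistributions, so by the de~Rham local decomposition theorem every point of $M$ has a neighborhood isometric to a Riemannian product $(M_1\times M_2,\, g_1+ g_2)$, where $(M_i, g_i)$ is Einstein with constant $\lambda$ or $\mu$ and $\dim M_i = n_i$, $n_1 + n_2 = n$. Since $Ric$ and $g$ are then block-diagonal, the $V$-static equation forces $\nabla^2 f(X, Y) = 0$ for $X \in TM_1$ and $Y \in TM_2$, hence $f(x, y) = f_1(x) + f_2(y)$.

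Projecting \eqref{eqVS} onto each $TM_i\times TM_i$ block yields that $\Delta_1 f_1 + \mu f_1$ and $\Delta_2 f_2 + \lambda f_2$ are constants. Combining this with the global relation $(n-1)\Delta f + fR + n\kappa = 0$, where $R = n_1\lambda + n_2\mu$, and matching the coefficients of $f_1$, $f_2$, and the constant term produces the algebraic system
\[
(n_1 - 1)\mu = n_1\lambda, \qquad (n_2 - 1)\lambda = n_2\mu.
\]
Multiplying forces either $\lambda\mu = 0$ (which, given $n_i \geq 1$, implies $\lambda = \mu$, contradicting our assumption) or $(n_1 - 1)(n_2 - 1) = n_1 n_2$, i.e., $n_1 + n_2 = 1$, impossible. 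The remaining degenerate branch is one of the $f_i$ being constant, but in that sub-case the same matching scheme forces $\kappa = 0$, violating our hypothesis. Therefore $\ric \equiv 0$, $M$ is Einstein, and \cite[Theorem 2.2]{miao2011einstein} supplies the four models; cases (3) and (4) then correspond precisely to whether $\nabla f$ has a zero on $M$.

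The main obstacle I anticipate is the careful handling of the local de~Rham step, since the splitting is only local while $\lambda, \mu, n_1, n_2$ are global invariants of $(M, g)$ that feed the algebraic identity. Equally delicate is the degenerate sub-case where $f$ depends on a single factor, as this is precisely where the hypothesis $\kappa\neq 0$ becomes decisive and without which one could not exclude product or warped-type solutions that are not Einstein.
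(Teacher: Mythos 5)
Your argument reaches the same intermediate goal as the paper---parallel Ricci tensor together with $\kappa\neq 0$ forces $(M^n,g)$ to be Einstein, after which \cite[Theorem 2.2]{miao2011einstein} supplies the four models---but by a genuinely different route. The paper never decomposes the manifold: it combines the Ricci identity, the contracted second Bianchi identity (which gives $\mathrm{div}\,Rm=0$ under $\nabla Ric=0$) and the $V$-static equation in a purely pointwise tensor computation culminating in
$0=\tfrac{\kappa n}{n-1}\bigl(|Ric|^2-\tfrac{R^2}{n}\bigr)$,
whence Einstein. You instead invoke the local de~Rham splitting along the parallel eigendistributions of $Ric$, separate $f=f_1+f_2$, and rule out the non-Einstein case by an algebraic incompatibility. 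I checked your bookkeeping: the diagonal blocks of \eqref{eqVS} do force $\Delta_1 f_1+\mu f_1$ and $\Delta_2 f_2+\lambda f_2$ to be constant, the trace identity then yields $(n_1-1)\mu=n_1\lambda$ and $(n_2-1)\lambda=n_2\mu$, both branches ($\lambda\mu=0$ and $n_1+n_2=1$) are impossible, and the degenerate sub-case with one $f_i$ constant does force $\kappa=0$. Your proof is longer than the paper's but arguably more illuminating: the degenerate branch is exactly where the $\kappa=0$ examples such as $\mathbb{H}^{p+1}\times\mathbb{H}^{q}$ from the first Remark after Theorem~\ref{ThmA} live, so your argument makes visible why the hypothesis $\kappa\neq 0$ is sharp, whereas in the paper's computation $\kappa$ simply appears as a factor at the last step.

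One point must be patched before the proof is complete: you tacitly assume that $Ric$ has \emph{exactly} two distinct eigenvalues when you write the local model as a two-factor product of Einstein pieces; with three or more distinct eigenvalues your $M_2$ would not be Einstein and the coefficient matching as written does not apply. The fix is short: if $Ric$ has $k\geq 3$ distinct eigenvalues, de~Rham gives $k$ Einstein factors and $f=\sum_j f_j$, and the $i$-th block of \eqref{eqVS} forces $\Delta_j f_j+\lambda_i f_j$ to be constant for every $j\neq i$; since for each $j$ there are two indices $i\neq i'$ with $\lambda_i\neq\lambda_{i'}$, every $f_j$ is constant, contradicting the non-constancy of $f$. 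You should also state explicitly that the contradiction is obtained on a single product neighborhood (using analyticity, $f$ is non-constant on every open set), so the merely local nature of the de~Rham splitting is harmless even though $\lambda,\mu,n_1,n_2$ are global invariants.
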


            \begin{remark}
           We emphasize that the condition $\kappa \neq 0$ in Theorem~\ref{ThmA} is essential and cannot be removed. Following the idea outlined in Costa et al. \cite[Example 1]{CDPR}, one sees that the product manifold $\mathbb{S}^{p+1} \times \mathbb{S}^q$, with $q > 1$, also defines a static va\-cuum space with parallel Ricci tensor which fails to be Einstein. In the noncompact setting, a simple calculation shows that the manifold $\mathbb{H}^{p+1}\times\mathbb{H}^q,$ $q>1,$ equipped with the metric $g = g_{\mathbb{H}^{p+1}} + \frac{q-1}{p+1}g_{\mathbb{H}^q}$ and potential function $f= cosh(r)$, where $r(x)$ is the height function on $\mathbb{H}^{p+1},$ also defines a static vacuum space with parallel Ricci tensor. However, this manifold is not Einstein whenever $p+1 \neq q.$
            \end{remark}
            
  \begin{remark}        
The proof of Theorem~\ref{ThmA} is partly inspired by \cite{baltazar2017critical}. Ho\-we\-ver, since $M^n$ has no boundary, it is necessary to derive new differential identities to circumvent the use of integration arguments, yielding a more direct and self-contained proof; see also Corollary \ref{cor2b}.
\end{remark}

For what follows, we recall the definition of the {\it Bach tensor} on a Riemannian manifold $(M^n,\,g).$ Originally introduced in the context of conformal relativity by Bach in the 1920s \cite{bac1921zur}, the Bach tensor plays a key role in conformal geometry. For dimensions $n \geq 4$, it is defined in terms of the Weyl tensor $W_{ijkl}$ by

\begin{equation}\label{bach4}
    B_{ij} = \frac{1}{n-3}\nabla_k\nabla_l W_{ikjl} + \frac{1}{n-2}R_{kl}W_{ikjl}.
\end{equation} In dimension $n=3$, where the Weyl tensor vanishes identically, the Bach tensor is instead defined via the Cotton tensor $C_{ijk}$ as
\begin{equation}\label{bach3}
    B_{ij} = \nabla_kC_{kij}.
\end{equation} A manifold $(M^n,\,g)$ is said to be \textit{Bach-flat} if $B_{ij}=0.$ In the $4$-dimensional compact case, Bach-flat metrics arise as the critical points of the conformally invariant functional
\begin{equation*}
    \mathcal{W}(g) = \int_M|W_g|^2 dV_g,
\end{equation*} where $W_g$ denotes the Weyl tensor associated to the metric $g.$ We recall that both locally conformally flat metrics and Einstein metrics are Bach-flat. Moreover, in dimension $n=4,$ it is well known that metrics which are half-conformally flat or locally conformal to Einstein metrics also satisfy the Bach-flat condition.

Inspired by the works of Cao and Chen \cite{caolocally2012,cao2013bach}, Barros, Di\'ogenes and Ribeiro \cite{barros2015bach} proved that a
Bach-flat critical metric of the volume functional on a simply connected $4$-dimensional manifold with boundary isometric to a standard sphere must be isometric to a geodesic ball in a simply connected space form $\Bbb{R}^4,$ $\Bbb{H}^4,$ or $\Bbb{S}^4;$ see also \cite{BBK}. Roughly speaking, they replaced the assumption of locally conformally flat in the Miao--Tam result (cf. \cite[Theorem 1.2]{miao2011einstein}) by the weaker condition of Bach-flat (see \cite{barros2015bach}). Moreover, they established a similar rigidity result in dimension $n=3$ under a weaker assumption.

A natural question that arises from these comments is what happens when the manifold is complete and has no boundary. In this article, we also address that question. More precisely, we esta\-blish the following result.

\begin{theorem}\label{ThmB}
    Let $(M^n,\,g,\,f),$ $n\geq 4,$ be an $n$-dimensional complete, simply connected Bach-flat $V$-static metric with $\kappa\neq 0$ and $f$ be a proper function. Then $(M^n,\,g)$ is isometric to either 
    
\begin{enumerate}
\item the standard sphere $\mathbb{S}^n,$ or 
\item the Euclidean space $\mathbb{R}^n,$ or 
\item the hyperbolic space $\mathbb{H}^n$ or
\item $\mathbb{R}\times_{\varphi}\Sigma_c,$
where $\Sigma_c$ is a regular level set of the potential function, which is an Einstein mani\-fold with $Ric_{\Sigma_c}=\lambda g_{\Sigma_c},$ and the warping function $\varphi$ is a solution of the ODE
\begin{equation}
\label{ODEgeral}
     \varphi\Big(2\varphi'' +\frac{R}{n-1}\varphi\Big) + (n-2)(\varphi')^2 = \lambda.
\end{equation}
\end{enumerate} 
\end{theorem}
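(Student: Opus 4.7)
My plan is to adapt the Cao-Chen \cite{cao2013bach} and Barros-Di\'ogenes-Ribeiro \cite{barros2015bach} strategy for Bach-flat critical metrics to the complete boundaryless setting, with the properness of $f$ replacing compactness with boundary. I would first record the basic identities: tracing $\mathcal{L}_g^{*}(f)=\kappa g$ yields constancy of $R$ (cf.\ \cite{corv2013def,miao2009volume}) and the Hessian formula
\[
\nabla^2 f = f\,Ric - \frac{fR+\kappa}{n-1}\,g.
\]
Differentiating this identity and commuting covariant derivatives via the Ricci identity $\nabla_i\nabla_j\nabla_k f-\nabla_j\nabla_i\nabla_k f = R_{ijkl}\nabla_l f$, then antisymmetrizing, should yield an algebraic expression for $f\,C_{ijk}$ in terms of $\nabla f$, $Ric$, and the Weyl contraction $W_{ijkl}\nabla_l f$. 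Substituting this into the definition \eqref{bach4} of the Bach tensor recasts $f\,B_{ij}$ as a divergence of a Cotton-weighted three-tensor plus a local quadratic expression that is positive definite in $C_{ijk}$ and $W_{ikjl}\nabla_k f$.

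The next step invokes Bach-flatness together with properness. Since $f$ is proper, the sublevel sets $\Omega_c=\{|f|\le c\}$ are compact, so integrating the pointwise identity above over $\Omega_c$ and applying Stokes' theorem reduces the problem to controlling the boundary flux across regular level sets $\Sigma_c$. Using that $\nabla f$ is collinear with the conormal on $\Sigma_c$ and that $|\nabla f|^2$ satisfies a transport equation along $\nabla f$ driven by $R$ and $\kappa$ (derived from the V-static equation), I would show that the flux vanishes in the limit $c\to\infty$. Combined with $B_{ij}=0$, this forces $C_{ijk}=0$ and $W_{ikjl}\nabla_k f=0$ pointwise on $M^n$.

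Once these tensors vanish, a standard structure theorem applies (cf.\ \cite{caolocally2012,cao2013bach}): $\nabla f$ is an eigenvector of $Ric$ on the regular set, $Ric$ has at most two distinct eigenvalues of multiplicities $1$ and $n-1$, regular level sets $\Sigma_c$ are totally umbilical with constant mean curvature and Einstein, and a tubular neighborhood of each $\Sigma_c$ is isometric to a warped product $I\times_\varphi \Sigma_c$. Inserting the warped-product ansatz into $\mathcal{L}_g^{*}(f)=\kappa g$ produces the ODE \eqref{ODEgeral} for $\varphi$. Simple-connectedness allows globalization: if $\nabla f$ has a zero, the warped product caps off smoothly and one obtains one of the space forms $\mathbb{S}^n$, $\mathbb{R}^n$, or $\mathbb{H}^n$; otherwise, properness forces the base interval to be $\mathbb{R}$, yielding conclusion $(4)$. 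The principal obstacle I foresee is the boundary-flux analysis at infinity in $c$: because $f$ is unbounded and the weighting $f$ in the key identity does not vanish on $\Sigma_c$, controlling that flux will require a sharp use of the V-static transport equation for $|\nabla f|^2$ together with the constancy of $R$, and I expect this to be the most delicate step of the argument.
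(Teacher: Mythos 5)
Your proposal follows essentially the same route as the paper: derive the identity $fC_{ijk}=T_{ijk}+W_{ijkl}\nabla_l f$ for the auxiliary three-tensor $T$ of \eqref{eqT}, rewrite $f^2B(\nabla f,\nabla f)$ as a divergence plus a negative multiple of $f^2|T|^2$, integrate over the compact sublevel sets $\Omega_c=\{|f|\le c\}$ supplied by properness to conclude $T\equiv 0$, obtain the local warped-product structure with Einstein fibers, and globalize using simple connectedness (this is exactly Lemma~\ref{lem5}, Proposition~\ref{Prop2}, and the claims in the proof of Theorem~\ref{ThmD}).

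Two corrections to your plan. First, the boundary-flux analysis you single out as the most delicate step is in fact immediate: the divergence term is $\nabla_k\bigl(fT_{kij}\nabla_i f\nabla_j f\bigr)$, and on each regular level set the outward conormal is $N=\pm\nabla f/|\nabla f|$, so the flux integrand is $\pm fT_{kij}\nabla_k f\nabla_i f\nabla_j f/|\nabla f|$, which vanishes pointwise by the antisymmetry of $T$ in its first two indices. No transport equation for $|\nabla f|^2$ and no asymptotic control as $c\to\infty$ are needed; properness enters only to make $\Omega_c$ compact and exhausting. Second, your assertion that Bach-flatness forces $C_{ijk}=0$ and $W_{ikjl}\nabla_k f=0$ separately is too strong: the quadratic term in the key identity is $|T|^2=|fC_{ijk}-W_{ijkl}\nabla_l f|^2$ (not a sum of two squares), so one only obtains $T\equiv 0$, i.e.\ $fC_{ijk}=W_{ijkl}\nabla_l f$. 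This does not derail the argument, since the structure theorem you invoke (umbilical, constant-mean-curvature, Einstein level sets and the local warped product) requires only $T\equiv 0$, but the intermediate conclusion should be stated in that weaker, correct form.
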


\vspace{0.20cm}

\begin{remark}
We emphasize that Theorem~\ref{ThmB} remains valid if the Bach-flat condition is replaced by the weaker assumption that the Bach tensor is radially flat, that is, $B(\nabla f,\nabla f)=0;$ see Lemma \ref{lem5} and Theorem \ref{ThmD} in Section \ref{proofBandC}.
\end{remark}

In order to prove Theorem~\ref{ThmB}, which is a special case of Theorem \ref{ThmD}, we adapt some techniques outlined in~\cite{barros2015bach,catino,cao2013bach, CaoYu, catino2016geometry,lopez2014note}. Our analysis begins by establishing a connection between the conditions $T \equiv 0$ and  Bach-flat, where $T$ denotes an auxiliary $3$-tensor (Lemma~\ref{lem5}). This step required revisiting and adapting several arguments originally formulated in~\cite{barros2015bach}, where the presence of a boundary was essential. In our context, the assumption that the potential function $f$ is proper plays a crucial role in circumventing the absence of boundary conditions. This requirement is also natural because, in our setting, there is no “integrability condition’’ analogous to Hamilton’s identity for Ricci solitons (see Eq. (4.13) in \cite{Book}), which makes it substantially more difficult to control the asymptotic behavior of the potential function. Next, we construct a local warped pro\-duct structure around the regular points of $f$ (Proposition~\ref{Prop2}). To extend this local structure to a global one, we adapt techniques from a recent work of Cao–Yu~\cite{CaoYu}, along with methods introduced by Cao--Chen \cite{cao2013bach} and Fern\'andez-L\'opez and Garc\'ia-R\'io~\cite{lopez2014note} in the study of Ricci solitons, which are well suited to the complete without boundary case. 

\vspace{0.20cm}

We now turn our attention to the lower-dimensional cases. In  particular, when $n=4,$ equation~\eqref{ODEgeral} becomes more tractable. As a consequence of Theorem~\ref{ThmB}, we obtain the following corollary.

\begin{corollary}\label{Cor1}
    Let $(M^4,\,g,\,f)$ be a complete, simply connected four-dimensional Bach-flat $V$-static metric with $\kappa\neq 0$ and $f$ be a proper function. Then $(M^4,\,g)$ is isometric to either 
    
    \begin{enumerate}
\item the standard sphere $\mathbb{S}^4,$ or 
\item the Euclidean space $\mathbb{R}^4,$ or 
\item the hyperbolic space $\mathbb{H}^4,$ or  
\item the warped product $\Bbb{R}\times_{\varphi} \Bbb{S}^3,$ where $\varphi$ is a solution of the ODE $$\varphi\left(\varphi'' + \frac{R}{6}\varphi\right) + (\varphi')^2 =1.$$
\end{enumerate}
\end{corollary}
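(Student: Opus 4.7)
The strategy is to apply Theorem~\ref{ThmB} directly with $n=4$ and then analyze the only nontrivial case, namely the warped product $\mathbb{R}\times_{\varphi}\Sigma_c$. Since the first three items in the conclusion of Theorem~\ref{ThmB} already match the first three items in Corollary~\ref{Cor1}, the entire task reduces to identifying the fiber $\Sigma_c$ and simplifying the ODE \eqref{ODEgeral}.

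\textbf{Identifying the fiber.} By Theorem~\ref{ThmB}, the fiber $\Sigma_c$ is a three-dimensional Einstein manifold with $Ric_{\Sigma_c} = \lambda g_{\Sigma_c}$. In dimension three, the Einstein condition is equivalent to constant sectional curvature, so $\Sigma_c$ is a space form. Next, I would observe that in the warped-product picture the potential $f$ depends only on the line factor (since $\nabla f$ is tangent to $\mathbb{R}$ on regular level sets), so the level set $\Sigma_c = f^{-1}(c)$ is compact by properness of $f$. Since $\mathbb{R}\times_{\varphi}\Sigma_c$ deformation-retracts onto $\Sigma_c$, the simple-connectedness of $M^4$ forces $\Sigma_c$ to be simply connected. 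Combining these three facts, $\Sigma_c$ is a compact, simply connected, three-dimensional space form, which, by the classical classification, is isometric to a round sphere of some radius. After normalizing the metric on $\Sigma_c$ to the unit round sphere $\mathbb{S}^3$ (which only rescales $\varphi$ and $\lambda$ consistently, since under $\tilde g_{\Sigma_c} = c^2 g_{\Sigma_c}$ one has $\tilde\varphi = \varphi/c$ and $\tilde\lambda = \lambda/c^2$, and the ODE \eqref{ODEgeral} is invariant under this scaling), we may assume $\lambda = 2$.

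\textbf{Reducing the ODE.} Substituting $n=4$ and $\lambda = 2$ into \eqref{ODEgeral} yields
\begin{equation*}
\varphi\Big(2\varphi'' + \frac{R}{3}\varphi\Big) + 2(\varphi')^2 = 2,
\end{equation*}
and dividing by $2$ produces exactly
\begin{equation*}
\varphi\Big(\varphi'' + \frac{R}{6}\varphi\Big) + (\varphi')^2 = 1,
\end{equation*}
as claimed.

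\textbf{Main obstacle.} The only nontrivial point is the scaling argument: one must check that the freedom in normalizing $g_{\Sigma_c}$ to obtain $\lambda = 2$ is compatible with the ODE, i.e.\ that both sides transform homogeneously under the rescaling $(\varphi,\lambda)\mapsto(\varphi/c,\lambda/c^2)$. This is a direct but necessary verification. Everything else — applying Theorem~\ref{ThmB}, invoking the three-dimensional classification of Einstein manifolds and of compact simply connected space forms, and transferring properness and simple-connectedness through the warped-product structure — is essentially automatic.
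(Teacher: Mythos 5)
Your proposal is correct and follows essentially the same route as the paper: apply Theorem~\ref{ThmB} with $n=4$, use simple connectedness (and compactness of the level set) together with the three-dimensional Einstein $=$ space form equivalence and the Killing--Hopf theorem to identify $\Sigma_c$ with $\mathbb{S}^3$, and then substitute $n=4$, $\lambda=2$ into \eqref{ODEgeral}. Your explicit verification that the normalization of $g_{\Sigma_c}$ to the unit round metric is compatible with the scaling of $\varphi$ and $\lambda$ in the ODE is a detail the paper leaves implicit, but it does not change the argument.
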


It is natural to ask whether an analogous result holds in the three-dimensional case. In this situation, however, arguments from Cao et al. \cite{catino}, combined with key steps from the second part of the proof of Theorem~\ref{ThmB}, allows us to establish our next result under the weaker assumption
 $div\,B(\nabla f)=0.$ 

\begin{theorem}
\label{ThmC}
     Let $(M^3,\,g,\,f)$ be a complete, simply connected $V$-static metric with $\kappa\neq 0$ and $f$ be a proper function. If $div\,B(\nabla f)=0,$ then $(M^3,\,g)$ is isometric to either 
    
    \begin{enumerate}
\item the standard sphere $\mathbb{S}^3,$ or 
\item the Euclidean space $\mathbb{R}^3,$ or 
\item the hyperbolic space $\mathbb{H}^3,$ or  
\item the warped product $\mathbb{R}\times_{\varphi}\Bbb{S}^2,$
where $\varphi$ is a solution of the ODE
\begin{equation}
\label{aaaaa}
     \varphi\Big(2\varphi''+\frac{R}{2}\varphi\Big) + (\varphi')^2 = 1.
\end{equation}
\end{enumerate} 
\end{theorem}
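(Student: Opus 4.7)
The plan is to follow the overall architecture of the proof of Theorem~\ref{ThmB} (which is subsumed by Theorem~\ref{ThmD}), but specialized to dimension three and with Lemma~\ref{lem5} replaced by a variant adapted to the weaker hypothesis $\mathrm{div}\,B(\nabla f)=0$, using ideas from Cao et al.~\cite{catino} tailored to the three-dimensional Cotton tensor. Recall that in dimension three the Weyl tensor vanishes identically, so the Bach tensor is $B_{ij}=\nabla_k C_{kij}$, and the auxiliary three-tensor $T$ introduced in the proof of Theorem~\ref{ThmD} reduces, essentially, to a $\nabla f$-weighted combination of components of the Cotton tensor. The objective is therefore to promote the divergence assumption on $B$ to the pointwise vanishing of $T$, and then to run the local-to-global warped-product machinery already developed for Theorem~\ref{ThmD}.

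\textbf{Step 1 (reducing the hypothesis to $T\equiv 0$).} First I would derive a three-dimensional weighted differential identity of the schematic form
\begin{equation*}
f\,|T|^{2} \;=\; \mathrm{div}(X) \;+\; c\,f\,\langle \mathrm{div}\,B,\nabla f\rangle,
\end{equation*}
where $X$ is a vector field built from $T$, $Ric$, $\nabla f$ and $R$, and $c$ is a dimensional constant. This mirrors the integration-by-parts mechanism in \cite{barros2015bach,catino}, exploiting the $V$-static equation~\eqref{eqVS} together with the Cotton-tensor definition to rewrite the contraction $T_{ijk}\nabla_k C_{kij}$ as a divergence plus lower-order terms that vanish on $V$-static metrics. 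Then, using that $f$ is proper, the sub-level sets $\Omega_{c}=\{|f|\le c\}$ are compact; integrating the identity on $\Omega_{c}$ (avoiding the critical set of $f$, which has measure zero and where $\nabla f$ vanishes so boundary contributions from it are harmless) and applying the divergence theorem gives
\begin{equation*}
\int_{\Omega_{c}} f\,|T|^{2}\, dV_{g} \;=\; \int_{\partial\Omega_{c}} \langle X,\nu\rangle\, dA \;+\; c\int_{\Omega_{c}} f\,\langle \mathrm{div}\,B,\nabla f\rangle\, dV_{g}.
\end{equation*}
The second term vanishes by the hypothesis $\mathrm{div}\,B(\nabla f)=0$. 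For the boundary term, the level set $\{f=\pm c\}$ is a regular compact surface for almost every $c$ by Sard's theorem, and the vector $X$ can be arranged so that its normal component on such a level set decays (or is controlled by geometric quantities forced to be constant along the level by the $V$-static equation), hence the flux vanishes in the limit $c\to\infty$ along a suitable sequence of regular values. This yields $T\equiv 0$ on each of the regions $\{f>0\}$ and $\{f<0\}$, and hence on all of $M^{3}$ by continuity.

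\textbf{Step 2 (local warped-product structure and classification).} With $T\equiv 0$ in hand, I would invoke Proposition~\ref{Prop2} applied in dimension three to obtain, around every regular point of $f$, a local warped product $I\times_{\varphi}\Sigma_{c}$ where $\Sigma_{c}$ is a regular level set of $f$. In dimension three the leaf $\Sigma_{c}$ is a surface, and properness of $f$ makes $\Sigma_{c}$ compact; combined with simple connectedness of $M^{3}$, the leaves must be diffeomorphic to $\mathbb{S}^{2}$ and Einstein (equivalently, of constant Gaussian curvature). Plugging the warped-product ansatz into~\eqref{eqVS} and tracing against $g$ yields the ODE~\eqref{aaaaa} for the warping function $\varphi$.

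\textbf{Step 3 (global extension).} To extend the local warped structure to a global one I would follow the globalization argument of the second part of the proof of Theorem~\ref{ThmD}, which itself adapts techniques of Cao--Chen~\cite{cao2013bach}, Fern\'andez-L\'opez--Garc\'ia-R\'io~\cite{lopez2014note} and Cao--Yu~\cite{CaoYu} from the Ricci-soliton setting to the $V$-static setting. If the critical set of $f$ is nonempty, the warped product closes up smoothly at the critical points and one recovers a simply connected complete space form, forcing $(M^{3},g)$ to be $\mathbb{S}^{3}$, $\mathbb{R}^{3}$ or $\mathbb{H}^{3}$ according to the sign of $R$; if $\nabla f$ never vanishes, the warped product extends to all of $\mathbb{R}\times_{\varphi}\mathbb{S}^{2}$.

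\textbf{Main obstacle.} The difficult step is Step~1: bootstrapping from $\mathrm{div}\,B(\nabla f)=0$ to $T\equiv 0$ in the \emph{complete without boundary} setting. In \cite{barros2015bach} the compact-with-boundary framework provides controllable boundary terms via the Dirichlet condition $f|_{\partial M}=0$, while in the complete case one must use properness of $f$ as a substitute. The delicate point is producing a flux term $\int_{\{|f|=c\}}\langle X,\nu\rangle\, dA$ that actually vanishes as $c\to\infty$; this requires the precise choice of $X$ from the Cao--Catino three-dimensional Cotton calculus in \cite{catino} so that the integrand is a total tangential divergence along the compact level set, or is controlled by a quantity that the $V$-static equation forces to be uniformly bounded in $c$.
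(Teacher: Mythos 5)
Your Steps~2 and~3 coincide with the paper's argument (invoke Proposition~\ref{Prop2} and then rerun the globalization from the second part of the proof of Theorem~\ref{ThmD}, with properness and simple connectedness forcing $\Sigma_c\cong\mathbb{S}^2$). The problem is Step~1, which is both structurally different from what the paper does and, as you yourself concede in your ``main obstacle'' paragraph, incomplete: you never actually produce the vector field $X$ nor show that the flux $\int_{\{|f|=c\}}\langle X,\nu\rangle\,dA$ vanishes along a sequence $c\to\infty$. Without that, the reduction to $T\equiv 0$ is not established, and this is the entire content of the three-dimensional improvement over Theorem~\ref{ThmB}.

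The point you are missing is that in dimension three no integration is needed at all. Since $W\equiv 0$, Lemma~\ref{lem2} gives the \emph{pointwise} identity $fC_{ijk}=T_{ijk}$. Combining the standard divergence formula $\nabla_iB_{ij}\nabla_jf=-R_{ik}C_{jki}\nabla_jf$ (Eq.~(4.2) in \cite{catino}, Eq.~(3.3) in \cite{barros2015bach}) with the definition \eqref{eqT} of $T$ and the trace-freeness of $C$ yields
\begin{equation*}
div\,B(\nabla f)=\tfrac{1}{4}C_{kji}T_{kji}=\tfrac{f}{4}|C|^2
\end{equation*}
pointwise. The hypothesis $div\,B(\nabla f)=0$ then forces $f|C|^2\equiv 0$, and since $f$ is analytic and non-constant, $\{f\neq 0\}$ is dense, so $C\equiv 0$ and hence $T=fC\equiv 0$ everywhere. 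No divergence theorem, no exhaustion by sublevel sets, and no control of boundary fluxes is required; properness of $f$ enters only later (compactness of the level sets, and the globalization inherited from Theorem~\ref{ThmD}). Your proposed weighted identity $f|T|^2=div(X)+c\,f\,\langle div\,B,\nabla f\rangle$ is modeled on the $n\geq 4$ argument of Lemma~\ref{lem5}, where an integration by parts genuinely is needed to pass from $B(\nabla f,\nabla f)$ to $|T|^2$; in dimension three the relation between $div\,B(\nabla f)$ and $|C|^2$ is already algebraic, which is precisely why the weaker hypothesis $div\,B(\nabla f)=0$ suffices there.
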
 

\vspace{0.30cm}

The remainder of this paper is organized as follows. In Section~\ref{prelim}, we review the essential background material and key results on $V$-static metrics that will be used throughout the proofs of our main results. Section~\ref{proofthmA} is dedicated to the proof of Theorem~\ref{ThmA}. In Section~\ref{proofBandC}, we present the proofs of Theorem~\ref{ThmB}, Corollary~\ref{Cor1} and Theorem~\ref{ThmC}.

\section{Background}
\label{prelim}
In this section, we recall some basic tensors from Riemannian geometry and highlight useful lemmas and facts on 
$V$-static spaces, which will play a fundamental role in the proofs of the main theorems.

We begin recalling that a $V$-static metric $(M^n,g,f)$ satisfies
\begin{equation}\label{eqdefsec2}
    -(\Delta f)g + \nabla^2f - fRic = \kappa g.
\end{equation} In particular, we may write (\ref{eqdefsec2}) in the tensorial notation as
\begin{equation}
\label{eqdefsec2aaa}
   -(\Delta f)g_{ij} + \nabla_{i}\nabla_{j} f - f R_{ij} = \kappa g_{ij}.
\end{equation} Taking the trace in (\ref{eqdefsec2aaa}), one sees that 
\begin{equation}\label{eqlap}
    \Delta f = -\frac{fR + \kappa n}{n-1}.
\end{equation} Moreover, it is easy to check that 
\begin{equation}\label{equivRicHess}
    f\mathring{Ric} = \mathring{\nabla^2 f},
\end{equation} where $\mathring{Z}=Z-\frac{{\rm tr}\,Z}{n}g$ stands for the traceless of tensor $Z.$

\begin{remark}
It should be mentioned that, choosing appropriate coordinates, $f$ and $g$ are analytic; see Proposition 2.1 in \cite{corv2013def}. Consequently, the set of regular points of $f$ is dense in $M^n.$
\end{remark}

We now recall some examples of $V$-static metrics obtained by Miao--Tam \cite{miao2011einstein} (see also \cite{corv2013def}) for complete manifolds without boundary. Let us start with the Euclidean space $\mathbb{R}^n$ with its canonical metric.

\begin{example}[\cite{miao2011einstein}]
    Consider $(\mathbb{R}^n,\,\delta)$, where $\delta$ is its canonical metric. Define the function
    \begin{equation}
        f(x) = \frac{1}{(n-1)}\left(A - \frac{\kappa}{2}|x|^2\right),
    \end{equation} where $A$ is a constant. It is straightforward to verify that the triple $(\mathbb{R}^n,\,\delta,\,f)$ satisfies Definition \ref{defstatic} with $\kappa\neq 0.$
\end{example}

We now present a similar example as before on the standard sphere $\mathbb{S}^n.$

\begin{example}[\cite{miao2011einstein}]
\label{exS}
    Consider $(\mathbb{S}^n,\,g)$ the sphere equipped with its canonical metric $g.$ Define the function $f$ along a geodesic  $\gamma(s)$ emanating from a point $p\in \mathbb{S}^n$ by
    \begin{equation}
        f(\gamma(s)) = \frac{1}{n-1}\Big(A \cos\,r-\kappa\Big),
    \end{equation} where $r$ denotes the geodesic distance from the point $p,$ such that $\nabla f(p)=0$, and $A$ is a constant. Thus, $(\mathbb{S}^n,\,g,\,f)$ is a $V$-static  metric.
\end{example}

Reasoning as in the spherical case, we have the following example on the hyperbolic space $\mathbb{H}^n.$

\begin{example}[\cite{miao2011einstein}]
    Consider $(\mathbb{H}^n,\,g_{_{\mathbb{H}^n}})$ the hyperbolic space with its canonical metric and define the function
    \begin{equation}
      f(\gamma(s))=  \frac{1}{n-1}\Big(\kappa -A\cosh\,r\Big),
    \end{equation}
    where $\gamma(s)$ is a geodesic emanating from a fixed point $p\in \mathbb{H}^n$ such that $\nabla f(p)=0$, and $A$ is a constant. Hence, $(\mathbb{H}^n,\,g_{_{\mathbb{H}^n}},\,f)$ is a $V$-static metric.
\end{example}

The next example is constructed using a warped product metric.

\begin{example}[\cite{miao2011einstein}]
\label{example4}
Consider $\mathbb{R}\times\Sigma^{n-1}$ equipped with the warped product metric $g=dt^2 + \cosh^2 t g_0,$ where $(\Sigma^{n-1},g_0)$ is complete, Einstein with $Ric_{g_0} = -(n-2)g_0$, and define the potential function $f(t,x) =\frac{\kappa}{n-1}\left( A\,\sinh t + 1\right)$, and $A>0$ is constant. Thus, $(\mathbb{R}\times\Sigma^{n-1},\,g,\, f)$ is a $V$-static metric.
\end{example}

In the sequel, we establish a few properties of compact $V$-static manifolds without boundary. As a first step, we shall show that such manifolds must necessarily have positive scalar curvature (cf. \cite[Theorem 7]{miao2009volume}).

\begin{proposition}
    Let $(M^n,\,g,\,f)$ be a compact $V$-static manifold without boundary. Then $(M^n,\,g)$ has positive scalar curvature.
\end{proposition}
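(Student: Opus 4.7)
The plan is to deduce the conclusion directly from the trace identity \eqref{eqlap} by a two-point maximum principle. The key inputs are (i) the constancy of the scalar curvature $R$ on any connected $V$-static manifold, as recalled in the excerpt following Definition \ref{defstatic} (cf. \cite{corv2013def,miao2009volume}), and (ii) the fact that the potential $f$ is, by definition, non-constant.

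First, I would use compactness and the absence of boundary to pick $p,q \in M^n$ realizing the maximum and minimum of $f$, respectively, so that $f(p) > f(q)$. Evaluating the identity $(n-1)\Delta f = -(fR + n\kappa)$ coming from \eqref{eqlap} at these two points, together with $\Delta f(p)\leq 0$ and $\Delta f(q)\geq 0$, yields the two scalar inequalities
$$
f(p)R + n\kappa \geq 0 \qquad \text{and} \qquad f(q)R + n\kappa \leq 0.
$$

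From here the proposition follows by a short case analysis on the sign of the \emph{constant} $R$. If $R = 0$, both inequalities force $\kappa = 0$, and then \eqref{eqlap} reduces to $\Delta f \equiv 0$ on a closed manifold, so $f$ is harmonic and hence constant, contradicting Definition~\ref{defstatic}. If $R < 0$, dividing by $R$ reverses the inequalities, giving $f(p) \leq -n\kappa/R \leq f(q)$, again contradicting $f(p) > f(q)$. Therefore $R > 0$.

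I do not anticipate a genuine obstacle here; the argument is essentially the $V$-static analog of the classical observation that a closed manifold cannot carry a non-constant harmonic function. The only point that requires care is invoking the constancy of $R$ before comparing the two pointwise inequalities as if $R$ were a single number, which is precisely why the excerpt stresses that fact immediately after Definition~\ref{defstatic}.
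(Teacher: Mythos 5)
Your argument is correct and follows essentially the same route as the paper: both evaluate the trace identity $(n-1)\Delta f=-(fR+n\kappa)$ at the maximum and minimum of $f$ and use the constancy of $R$ together with the non-constancy of $f$ to rule out $R=0$ and $R<0$. The only cosmetic difference is that the paper phrases the $R<0$ case as forcing $\Delta f\equiv 0$ (hence $f$ constant), whereas you directly compare $f(p)$ and $f(q)$; the content is identical.
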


\begin{proof}
Since $(M^n,\,g)$ has constant scalar curvature, we first assume that $R=0.$ In this case, it follows from (\ref{eqlap}) that $(n-1)\Delta f = -\kappa n$ and hence, by the Maximum Principle, $f$ is constant. Now, suppose that $R<0.$ Thus, we can choose points $p,q \in M$ such that $f(q) = \min f$ and $f(p) = \max f.$ Then, one sees that
    \begin{equation*}
        -Rf(q) - \kappa n\le -Rf - \kappa n\le -Rf(p)- \kappa n,
    \end{equation*} and by (\ref{eqlap}), we have
    \begin{equation*}
        0\leq \Delta f(q) \le \Delta f \le \Delta f(p)\leq 0.
    \end{equation*} Thus, $\Delta f =0$ and therefore, $f$ must be constant. This finishes the proof. 
\end{proof}

Next, we shall show that if the potential function of a compact $V$-static manifold without boundary does change sign, then it is necessarily Example \ref{exS}.

\begin{proposition}
    Let $(M^n,\,g,\,f)$ be a compact $V$-static manifold without boundary. Suppose that $f$ does not change sign. Then $(M^n,\,g)$ is isometric to the standard sphere $\mathbb{S}^{n}$.
\end{proposition}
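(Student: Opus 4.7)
The plan is to reduce to the Einstein case and then invoke Obata's theorem. After replacing $(f,\kappa)$ by $(-f,-\kappa)$ if necessary (which preserves~\eqref{eqdefsec2}), I may assume $f\geq 0$. Integrating~\eqref{eqlap} over $M$ and using the previous proposition ($R>0$) together with the non-constancy of $f$ forces $\kappa<0$ and yields the identity $R\int_M f\,dV=-\kappa n\,\Vol(M)$. Moreover, at any point $p$ with $f(p)=0$, equation~\eqref{eqdefsec2aaa} combined with~\eqref{eqlap} reduces to $\nabla^2 f(p)=-\tfrac{\kappa}{n-1}g_p$, which is strictly positive definite; hence every zero of $f$ is an isolated nondegenerate local minimum, and $\{f>0\}$ is open and dense in $M$.

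The core of the argument is to prove that $(M,g)$ is Einstein. Since $R$ is constant, the contracted second Bianchi identity together with integration by parts (the boundary term vanishes) gives
\begin{equation*}
\int_M\langle\nabla^2 f,Ric\rangle\,dV=-\tfrac{1}{2}\int_M\langle\nabla f,\nabla R\rangle\,dV=0.
\end{equation*}
On the other hand, taking the inner product of~\eqref{eqdefsec2} with $Ric$, integrating, and using $\int_M\Delta f\,dV=0$ produces
\begin{equation*}
0=\kappa R\,\Vol(M)+\int_M f|Ric|^2\,dV.
\end{equation*}
Substituting $-\kappa\,\Vol(M)=\tfrac{R}{n}\int_M f\,dV$ from the earlier identity into this equation gives $\int_M f\bigl(|Ric|^2-\tfrac{R^2}{n}\bigr)dV=0$, i.e., $\int_M f|\ric|^2\,dV=0$. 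Since $f\geq 0$ is positive on the dense set $\{f>0\}$ and $\ric$ is continuous, this forces $\ric\equiv 0$, so $Ric=\tfrac{R}{n}g$ everywhere.

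To conclude, substituting $Ric=\tfrac{R}{n}g$ back into~\eqref{eqdefsec2} and simplifying via~\eqref{eqlap} yields by direct computation
\begin{equation*}
\nabla^2\!\Bigl(f+\tfrac{n\kappa}{R}\Bigr)=-\tfrac{R}{n(n-1)}\Bigl(f+\tfrac{n\kappa}{R}\Bigr)g,
\end{equation*}
which, since $f$ is non-constant and $R>0$, is precisely the hypothesis of Obata's theorem. This forces $(M,g)$ to be isometric to the standard sphere $\mathbb{S}^n$. The most delicate step is the integral identity $\int_M f|\ric|^2\,dV=0$, whose validity depends crucially on the constancy of the scalar curvature so that the Bianchi divergence term drops out; once Einsteinness is established, the remainder is a formal computation followed by Obata's rigidity result.
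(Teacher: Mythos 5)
Your proof is correct, and its core coincides with the paper's: both arguments reduce the statement to the integral identity $\int_M f\,|\mathring{Ric}|^2\,dV_g=0$ and then use the sign condition on $f$ to force the metric to be Einstein. You reach that identity by integrating \eqref{eqlap} and the $Ric$-contraction of \eqref{eqdefsec2} separately and combining the two; the paper gets it in one stroke by integrating the pointwise divergence identity $div(\mathring{Ric}(\nabla f))=f|\mathring{Ric}|^2$, which follows from $div\,\mathring{Ric}=0$ and \eqref{equivRicHess} --- the two computations are equivalent integrations by parts. Where you genuinely diverge is the endgame: the paper simply cites the Miao--Tam classification of complete Einstein $V$-static metrics (\cite[Theorem 2.2]{miao2011einstein}), whereas you rederive the Obata equation $\nabla^2 u=-\tfrac{R}{n(n-1)}\,u\,g$ for $u=f+\tfrac{n\kappa}{R}$ and invoke Obata's rigidity theorem directly. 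Your route is self-contained (and is essentially how the compact case of the cited theorem is proved), at the cost of some extra bookkeeping: the sign of $\kappa$, the identity $R\int_M f\,dV_g=-\kappa n\,\mathrm{Vol}(M)$, and the density of $\{f>0\}$. On that last point your Hessian argument at the zeros of $f$ is valid (note it silently uses $\nabla f=0$ there, which holds because such a point is a global minimum of $f\geq 0$), but the paper's remark that $f$ and $g$ are analytic already gives the density of $\{f\neq 0\}$ for free.
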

\begin{proof}
    Notice that
    \begin{equation*}
        div(\mathring{Ric}(\nabla f)) = (div\mathring{Ric})(\nabla f) + \langle \mathring{Ric},\mathring{\nabla^2 f}\rangle.
    \end{equation*} By the twice contracted second Bianchi identity $(2 div\,Ric=\nabla R),$ one sees that $div\mathring{Ric}=0.$ Hence, it follows from (\ref{equivRicHess}) that
    \begin{equation}
    \label{ekl1110}
        div(\mathring{Ric}(\nabla f)) = f|\mathring{Ric}|^2.
    \end{equation}
   Upon integrating (\ref{ekl1110}) over $M^n$, we apply the Stokes' theorem combined with the fact that $M^n$ has no boundary in order to infer
    \begin{equation*}
        \int_M f|\mathring{Ric}|^2 dV_g =0.
    \end{equation*} Since $f$ does not change sign, we deduce that $\mathring{Ric}\equiv0,$ and hence the manifold is Einstein. Therefore, by applying \cite[Theorem 2.2]{miao2011einstein}, we conclude that $(M^n,\,g)$ is isometric to the standard sphere $\mathbb{S}^n,$ as stated. 
    \end{proof}

    \subsection{Some key tensors}
We recall some classical tensors that play a fundamental role in the study of Riemannian manifolds. The first is the Weyl tensor $W,$ which is the traceless component of the Riemann curvature tensor $R_{ijkl}$ of a Riemannian manifold $(M^n,\,g).$ It is defined by the following decomposition:
\begin{eqnarray}\label{eqW}
     R_{ijkl} &=& W_{ijkl} +\frac{1}{n-2}\Big(R_{ik}g_{jl}+ R_{jl}g_{ik} -R_{il}g_{jk}-R_{jk}g_{il}\Big) \\\nonumber
    &&-\frac{R}{(n-1)(n-2)}\Big(g_{jl}g_{ik}-g_{il}g_{jk}\Big).
\end{eqnarray}
Next, we consider the Cotton tensor $C_{ijk},$ which is defined as
\begin{equation}\label{eqC}
    C_{ijk} = \nabla_iR_{jk}-\nabla_jR_{ik} - \frac{1}{2(n-1)}\Big(\nabla_iRg_{jk}- \nabla_jRg_{ik}\Big).
\end{equation} For dimensions $n\geq 4,$ the Cotton tensor is related to the Weyl tensor by
\begin{equation}\label{eqC2}
    C_{ijk} = -\frac{n-2}{n-3}\nabla_lW_{ijkl}.
\end{equation} Another important tensor is the Schouten tensor, defined by
\begin{equation}\label{eqA}
    A_{ij} = R_{ij}-\frac{R}{2(n-1)}g_{ij}.
\end{equation} In terms of $A,$ we can express the Riemann curvature tensor as   
\begin{equation}
    R_{ijkl} = \frac{1}{n-2}\left(A\odot g\right)_{ijkl} + W_{ijkl}.
\end{equation}

Now, in the context of a $V$-static metric $(M^n,\,g,\,f),$ it is useful to recall an auxiliary $3$-tensor $T$ introduced by Barros--Di\'ogenes--Ribeiro \cite{barros2015bach} and inspired by the tensor $D$ defined by Cao--Chen \cite{cao2013bach} in the study of Ricci solitons. More precisely, given an $n$-dimensional $V$-static metric $(M^n,\,g,\,f),$ the tensor $T_{ijk}$ is given by

\begin{eqnarray}\label{eqT}
    T_{ijk} &=& \frac{n-1}{n-2}\Big(R_{ik}\nabla_j f - R_{jk}\nabla_i f\Big) - \frac{R}{n-2}\Big(g_{ik}\nabla_jf - g_{jk}\nabla_if\Big)\nonumber\\
    &&+\frac{1}{n-2}\Big(g_{ik}R_{js}\nabla_s f - g_{jk}R_{is}\nabla_s f\Big).
\end{eqnarray} Notice that $T_{ijk}$ is skew-symmetric in the first two indices and trace-free in any two indices.

It turns out that the tensor $T$ can be expressed in terms of the Weyl and Cotton tensors as follows (cf. \cite[Lemma 2]{barros2015bach}).

\begin{lemma}[\cite{barros2015bach}]
\label{lem2}
    Let $(M^n,\,g,\,f)$ be an $n$-dimensional $V$-static metric. Then we have:
    \begin{equation}
        fC_{ijk} = T_{ijk} + W_{ijkl}\nabla_l f.
    \end{equation}
\end{lemma}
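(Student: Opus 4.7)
The plan is to derive the identity by taking one more covariant derivative of the $V$-static equation and antisymmetrizing, then matching terms against the Weyl decomposition of the Riemann tensor. The constancy of the scalar curvature (noted in the excerpt after Definition~\ref{defstatic}) will be essential, as will the explicit formula \eqref{eqlap} for $\Delta f$.

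Concretely, I would first rewrite \eqref{eqdefsec2aaa} as $\nabla_{i}\nabla_{j}f = fR_{ij} + \phi\, g_{ij}$, where $\phi := \kappa + \Delta f$. Since $R$ is constant, differentiating \eqref{eqlap} gives
\begin{equation*}
\nabla_i \phi = \nabla_i \Delta f = -\tfrac{R}{n-1}\nabla_i f.
\end{equation*}
I would then apply $\nabla_i$ and $\nabla_j$ to the $V$-static equation and subtract, producing
\begin{equation*}
\nabla_i\nabla_j\nabla_k f - \nabla_j\nabla_i\nabla_k f
= \nabla_i f\, R_{jk} - \nabla_j f\, R_{ik} + f(\nabla_i R_{jk} - \nabla_j R_{ik}) + \nabla_i\phi\, g_{jk} - \nabla_j\phi\, g_{ik}.
\end{equation*}
The left-hand side is the Ricci commutator $R_{ijkl}\nabla^l f$, and because $R$ is constant the Cotton tensor \eqref{eqC} reduces to $C_{ijk} = \nabla_i R_{jk} - \nabla_j R_{ik}$. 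Substituting in the expression for $\nabla_i\phi$ therefore yields
\begin{equation*}
R_{ijkl}\nabla^l f = \nabla_i f\, R_{jk} - \nabla_j f\, R_{ik} + f C_{ijk} - \tfrac{R}{n-1}\bigl(\nabla_i f\, g_{jk} - \nabla_j f\, g_{ik}\bigr).
\end{equation*}

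The remaining step is bookkeeping. I would contract the Weyl decomposition \eqref{eqW} against $\nabla^l f$ to expand $R_{ijkl}\nabla^l f$, and then solve the resulting equation for $W_{ijkl}\nabla^l f$. Collecting the $R_{ik}\nabla_j f$ and $R_{jk}\nabla_i f$ terms produces the coefficient $\tfrac{n-1}{n-2}$; combining the two contributions $-\tfrac{R}{n-1}$ and $\tfrac{R}{(n-1)(n-2)}$ in the pure-metric terms produces exactly $\tfrac{R}{n-2}$; and the contracted-Ricci terms $R_{is}\nabla^s f\, g_{jk}$ and $R_{js}\nabla^s f\, g_{ik}$ appear with coefficient $\tfrac{1}{n-2}$. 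Comparing with the definition \eqref{eqT} of $T_{ijk}$, one reads off $W_{ijkl}\nabla^l f = fC_{ijk} - T_{ijk}$, which is the claim.

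The proof is essentially a bookkeeping exercise once the Ricci identity is applied, so the main obstacle is simply to organize the algebra carefully and verify the coefficients $\tfrac{n-1}{n-2}$ and $\tfrac{R}{n-2}$ emerging from the Weyl decomposition match those in the definition of $T_{ijk}$. No further geometric input beyond \eqref{eqdefsec2aaa}, \eqref{eqlap}, and the constancy of $R$ is needed.
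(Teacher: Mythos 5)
Your proposal is correct and follows essentially the same route as the source: the intermediate identity you derive by differentiating the $V$-static equation and applying the Ricci identity is precisely Lemma~\ref{lem1} (quoted by the paper from the same reference), and the remainder is the substitution of the Weyl decomposition \eqref{eqW} together with the observation that constancy of $R$ reduces the Cotton tensor to $C_{ijk}=\nabla_iR_{jk}-\nabla_jR_{ik}$. The coefficient bookkeeping you describe checks out against the definition \eqref{eqT} of $T_{ijk}$.
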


The tensor $T$ is closely related to the geometry of the level surfaces $\Sigma_c$ of the potential function $f,$ namely, $\Sigma_c = \{x\in M :\,f(x)=c\}.$ Assuming that $T$ vanishes identically, we obtain the following interesting properties (cf. \cite[Proposition 1]{barros2015bach}).

\begin{proposition}[\cite{barros2015bach}]
\label{Prop1}
    Let $(M^n,\,g,\,f)$ be an $n$-dimensional $V$-static metric with $T\equiv 0$. Let $c$ be a
regular value of $f$ and $\Sigma_c = \{x\in M :\,f(x)=c\}$ be a level set of $f.$ Consider $e_1 := \frac{\nabla f}{|\nabla f|}$ and choose an orthonormal frame $\{e_2,...,e_n\}$ tangent to $\Sigma_c$. Then the following assertions hold:

    \begin{enumerate}
        \item The second fundamental form $h_{\alpha\beta}$ of $\Sigma_c$ is $h_{\alpha\beta} = \frac{H}{n-1}g_{\alpha\beta}$;
        \item $|\nabla f|$ is constant on $\Sigma_c$;
        \item $R_{1\alpha} = 0$ for any $\alpha \geq 2$ and $e_1$ is an eigenvector of $Ric$;
        \item The mean curvature of $\Sigma_c$ is constant;
        \item On $\Sigma_c$, the Ricci tensor either has a unique eigenvalue or two distinct eigenvalues with multiplicity $1$ and $n-1,$ respectively. Moreover, the eigenvalue with multiplicity $1$ is in the direction of $\nabla f$;
        \item $R_{1\alpha\beta\gamma}=0$, $\alpha,\beta,\gamma \in \{2,...,n\}.$
    \end{enumerate}
\end{proposition}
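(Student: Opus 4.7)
The plan is to extract all six assertions from the single algebraic identity $T\equiv 0$, using the $V$-static equation to move between Hessian and Ricci information. I would first substitute carefully chosen indices into \eqref{eqT}. Taking $i=1$, $k=1$, $j=\alpha$ with $\alpha\geq 2$, the terms involving $\nabla_\alpha f$ drop because $\nabla f$ is parallel to $e_1$, and what remains collapses to $-R_{1\alpha}|\nabla f|=0$. Since $c$ is a regular value, this yields $R_{1\alpha}=0$ for $\alpha\geq 2$, proving assertion (3). Next, taking $i=1$, $j=\alpha$, $k=\beta$ with $\alpha,\beta\geq 2$ and using $R_{1s}\nabla_s f = R_{11}|\nabla f|$ (which follows from (3)), the identity $T_{1\alpha\beta}=0$ rearranges into
\[
R_{\alpha\beta} = \frac{R-R_{11}}{n-1}g_{\alpha\beta},
\]
which is precisely (5).

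To obtain (1), I would feed (5) into the $V$-static equation in tangential directions. Since $N=\nabla f/|\nabla f|$ and $\nabla^2 f = fRic + (\Delta f + \kappa)g$, the second fundamental form is
\[
h_{\alpha\beta} = \frac{\nabla^2_{\alpha\beta} f}{|\nabla f|} = \frac{fR_{\alpha\beta} + (\Delta f + \kappa)g_{\alpha\beta}}{|\nabla f|},
\]
which by (5) is proportional to $g_{\alpha\beta}$, so taking the trace produces $h_{\alpha\beta} = \frac{H}{n-1}g_{\alpha\beta}$. For (2), the tangential derivative
\[
e_\alpha(|\nabla f|^2) = 2\,\nabla^2_{\alpha 1}f\,|\nabla f| = 2\bigl(fR_{\alpha 1} + (\Delta f + \kappa)g_{\alpha 1}\bigr)|\nabla f|
\]
vanishes by (3), so $|\nabla f|$ is constant on $\Sigma_c$.

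Assertion (4) needs more care because, in the expression $H = \bigl(f(R-R_{11}) + (n-1)(\Delta f + \kappa)\bigr)/|\nabla f|$, the quantity $R_{11}$ has not yet been shown to be constant on $\Sigma_c$. The idea is that, by (2), $|\nabla f|$ depends only on $f$ along the foliation, so $N(|\nabla f|)$ is constant on each leaf. The normal component of the $V$-static equation,
\[
N(|\nabla f|) = \nabla^2_{NN} f = fR_{11} + \Delta f + \kappa,
\]
then forces $R_{11}$ to be constant on $\Sigma_c$ where $f\neq 0$; the locus $\{f=0\}$ is handled directly since the tangential equation reduces there to $\nabla^2_{\alpha\beta} f = -\tfrac{\kappa}{n-1}g_{\alpha\beta}$. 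Combined with (1), (2), (5), and the global constancy of $R$, this gives (4). Finally, once $\Sigma_c$ is totally umbilical with constant mean curvature, the Codazzi--Mainardi equation
\[
R_{1\alpha\beta\gamma} = \nabla^{\Sigma}_\beta h_{\alpha\gamma} - \nabla^{\Sigma}_\gamma h_{\alpha\beta} = 0
\]
delivers (6). I expect the main obstacle to be the constancy of $R_{11}$ along $\Sigma_c$, since all other items reduce either to index substitutions in $T\equiv 0$ or to elementary manipulations with the $V$-static equation.
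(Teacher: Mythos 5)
Your proposal is correct and follows essentially the same route as the proof in the cited source \cite{barros2015bach} (the present paper only quotes Proposition~\ref{Prop1} without reproducing its proof): items (3) and (5) come from the contractions $T_{1\alpha 1}=0$ and $T_{1\alpha\beta}=0$, items (1), (2) and (4) from the fundamental equation $\nabla^2 f=f\,Ric+(\Delta f+\kappa)g$ together with the observation that $|\nabla f|$ is locally a function of $f$ alone, and item (6) from the Codazzi equation once $\Sigma_c$ is totally umbilical with constant mean curvature. Your treatment of the only delicate point, the constancy of $R_{11}$ on $\Sigma_c$ (including the separate case $f=0$), is sound.
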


\section{$V$-static metrics with parallel Ricci tensor}
\label{proofthmA}

This section is devoted to the proof of Theorem~\ref{ThmA}. To begin with, we need to recall the following lemma, first established in~\cite[Lemma 1]{barros2015bach}.

\begin{lemma}[\cite{barros2015bach}]
\label{lem1}
    Let $(M^n,\,g,\,f)$ be an $n$-dimensional $V$-static metric. Then we have:
    \begin{equation*}
        f(\nabla_i R_{jk} - \nabla_j R_{ik}) = R_{ijkl}\nabla_l f + \frac{R}{n-1}(\nabla_i f g_{jk}-\nabla_j f g_{ik})-(\nabla_i f R_{jk}-\nabla_j f R_{ik}).
           \end{equation*}
\end{lemma}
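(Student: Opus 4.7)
The plan is to start from the $V$-static equation, rewrite it as a formula for the Hessian of $f$, differentiate once, and then commute covariant derivatives via the Ricci identity to produce the Riemann term. The hypothesis that the scalar curvature is constant (recalled in the introduction as a consequence of \cite[Proposition 2.1]{corv2013def}) will make the bookkeeping clean, because $\nabla R = 0$ and $\kappa$ is a constant.

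First, I would rearrange the $V$-static equation \eqref{eqdefsec2aaa} together with \eqref{eqlap} to obtain the pointwise identity
\begin{equation*}
    \nabla_i\nabla_j f \;=\; f R_{ij} \;-\; \frac{fR+\kappa}{n-1}\,g_{ij},
\end{equation*}
which expresses the Hessian of $f$ in terms of $f$, $R_{ij}$ and the metric. Since $R$ and $\kappa$ are constants, applying $\nabla_k$ to both sides gives
\begin{equation*}
    \nabla_k\nabla_i\nabla_j f \;=\; (\nabla_k f)\,R_{ij} \;+\; f\,\nabla_k R_{ij} \;-\; \frac{R}{n-1}(\nabla_k f)\,g_{ij},
\end{equation*}
with no derivative of $R$ appearing.

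Next, I would skew-symmetrize in the first two indices. Swapping the roles of $k$ (playing the outer index) with $i$, and subtracting, I get, on the right-hand side,
\begin{equation*}
    f\bigl(\nabla_i R_{jk} - \nabla_j R_{ik}\bigr) + \bigl(\nabla_i f\,R_{jk} - \nabla_j f\,R_{ik}\bigr) - \frac{R}{n-1}\bigl(\nabla_i f\,g_{jk} - \nabla_j f\,g_{ik}\bigr).
\end{equation*}
On the left-hand side, the commutator of third covariant derivatives of the scalar $f$ can be evaluated via the Ricci identity applied to the $1$-form $\nabla_k f$, yielding an expression of the form $\pm R_{ijkl}\nabla_l f$ (the sign matches the statement under the curvature convention used throughout the paper). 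Solving for $f(\nabla_i R_{jk} - \nabla_j R_{ik})$ and moving the remaining terms to the right produces exactly the identity in the lemma.

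I expect no real obstacle beyond careful sign bookkeeping: the only substantive inputs are the $V$-static equation, the constancy of $R$ and $\kappa$, and the Ricci commutation formula for $\nabla^2(\nabla f)$. No integration, boundary term, or global argument is needed; the identity is purely pointwise.
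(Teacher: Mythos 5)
Your proposal is correct and follows essentially the same route as the source of this lemma (the paper itself only cites \cite[Lemma 1]{barros2015bach} without reproducing the proof): rewrite the $V$-static equation as $\nabla_i\nabla_j f = fR_{ij}-\frac{fR+\kappa}{n-1}g_{ij}$, differentiate using the constancy of $R$ and $\kappa$, skew-symmetrize, and apply the Ricci identity to $\nabla f$. The sign of the curvature term does come out as $+R_{ijkl}\nabla_l f$ under the convention fixed by the Weyl decomposition \eqref{eqW} (one can sanity-check this on $\mathbb{S}^n$, where $\nabla Ric=0$ and both sides vanish), so no gap remains.
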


\vspace{0.30cm}
We now present the proof of Theorem \ref{ThmA}.

\begin{proof}[{\bf Proof of Theorem \ref{ThmA}}]
    Since $M^n$ has parallel Ricci tensor, we have $\nabla_iR_{jk} = 0$ for all indices $1 \leq i,j,k \leq n.$ Hence, by using the Ricci identity, one obtains that
\begin{align*}
    0 &= f(\nabla_l\nabla_kR_{lj})R_{kj} = f(\nabla_k\nabla_lR_{lj} + R_{lkli}R_{ij} + R_{lkji}R_{li})R_{kj}\\
    &=fR_{ki}R_{ij}R_{kj} + fR_{lkji}R_{li}R_{kj}.
\end{align*} Rearranging the indices, we get
\begin{equation}\label{eq1}
    fR_{ij}R_{ik}R_{kj} = fR_{ijkl}R_{jl}R_{ik}.
\end{equation} 

On the other hand, by using (\ref{eqdefsec2aaa}), one sees that

\begin{align*}
    fR_{ijkl}R_{jl}R_{ik} =& R_{ijkl}R_{jl}\left(-(\Delta f + \kappa)g_{ik} + \nabla_i\nabla_k f\right)\\
    =&-(\Delta f + \kappa)|Ric|^2 + R_{ijkl}R_{jl}\nabla_{i}\nabla_{k}f\\ =&-(\Delta f + \kappa)|Ric|^2 + \nabla_i(R_{ijkl}R_{jl}\nabla_k f)-\nabla_iR_{ijkl}R_{jl}\nabla_k f,
\end{align*} where we used that $\nabla_iR_{jl}=0.$ It follows from the Bianchi identity that

\begin{equation}
\label{secBia}
(div\,Rm)_{jkl}=\nabla_{k}R_{jl} - \nabla_{l}R_{jk}.
\end{equation} Hence, our assumption implies $div\,Rm=0.$ Consequently,

\begin{equation}\label{eq211}
    fR_{ijkl}R_{jl}R_{ik}= -(\Delta f + \kappa)|Ric|^2 + \nabla_i(R_{ijkl}R_{jl}\nabla_k f).
\end{equation}

Notice that 
\begin{eqnarray*}
\nabla_{i}\left(\nabla_{j}f R_{ik}R_{jk}\right)=\nabla_{i}\nabla_{j}f R_{ik} R_{jk}, 
\end{eqnarray*} and using (\ref{eqdefsec2aaa}), we have 

\begin{equation*}
\nabla_{i}\left(\nabla_{j}f R_{ik}R_{jk}\right)=(\Delta f + \kappa)|Ric|^2 +f R_{ij}R_{ik}R_{jk}.
\end{equation*} This jointly with (\ref{eq211}) gives

\begin{equation}\label{eq4}
    \nabla_i(R_{ijkl}R_{jl}\nabla_k f) = \nabla_i(\nabla_j fR_{ik}R_{jk}).
\end{equation} 

From Lemma \ref{lem1} and our assumption, one sees that

\begin{align*}
    R_{ijkl}\nabla_k fR_{jl} =& \bigg(\frac{R}{n-1}\big(\nabla_i fg_{jl}-\nabla_jf g_{il}\big) - \big(\nabla_i fR_{jl}-\nabla_j fR_{il}\big)\bigg)R_{jl}\\
    =& \frac{R^2}{n-1}\nabla_i f - \frac{R}{n-1}\nabla_j fR_{ij}- \nabla_i f|Ric|^2 + \nabla_j fR_{il}R_{jl},
\end{align*} so that

\begin{eqnarray}\label{eq5}
    \nabla_i(R_{ijkl}\nabla_k fR_{jl}) &=& \frac{R^2}{n-1}\Delta f - \frac{R}{n-1}\nabla_i\nabla_j fR_{ij} - \Delta f|Ric|^2\nonumber\\&& + \nabla_i(\nabla_j fR_{ik}R_{jk}),
\end{eqnarray} where we used again the Ricci parallel condition. By combining (\ref{eq5}) and (\ref{eq4}), one obtains that 

\begin{equation}
\label{eq991}
0=\frac{R^2}{n-1}\Delta f - \frac{R}{n-1}\nabla_i\nabla_j fR_{ij} - \Delta f|Ric|^2.
\end{equation} Now, it follows from (\ref{eqdefsec2aaa}) that
\begin{equation*}
    \nabla_i\nabla_j fR_{ij} = f|Ric|^2 + (\Delta f+\kappa)R, 
\end{equation*} which combined with (\ref{eq991}) yields

\begin{eqnarray}
\label{plkl10}
    0 &= &\frac{R^2}{n-1}\Delta f -\frac{fR}{n-1}|Ric|^2 - \frac{R^2(\Delta f+\kappa)}{n-1} - \Delta f|Ric|^2\nonumber\\
    &=&\frac{\kappa n}{n-1}\bigg(|Ric|^2 - \frac{R^2}{n}\bigg),
\end{eqnarray}
where in the second equality we used (\ref{eqlap}). Since $\kappa\neq 0,$ one obtains that $$\left|Ric-\frac{R}{n}g\right|^2 =|Ric|^2 - \frac{R^2}{n}=0$$ and therefore, $(M^n,\,g)$ is an Einstein manifold. Thus, it suffices to apply \cite[Theorem 2.2]{miao2011einstein} to conclude that $(M^n,\,g)$ is isometric to either the standard sphere $\mathbb{S}^n,$ or the Euclidean space $\mathbb{R}^n,$ or the hyperbolic space $\mathbb{H}^n,$ or a warped product space $(\mathbb{R}\times\Sigma^{n-1},\,dt^2+\cosh^2 t g_0)$, where $(\Sigma^{n-1},g_0)$ is complete, Einstein with $Ric = -(n-2)g_0$, $f(t,x) =\kappa\left( A\,\sinh t + \frac{1}{n-1}\right)$, and $A>0$ is constant. So, the proof is completed. 
\end{proof}

Another advantage of this approach is that the proof of Theorem~\ref{ThmA} yields a new and more direct proof of \cite[Corollary~1]{baltazar2017critical}. More precisely, we get the following corollary.

\begin{corollary}
\label{cor2b}
Let $(M^n,\, g,\, f)$ be an $n$-dimensional connected, compact $V$-static metric with $\kappa\neq 0,$ parallel Ricci tensor and smooth boundary $\partial M.$ Then $(M^n,\, g)$ is
isometric to a geodesic ball in a simply connected space form $\mathbb{R}^n$, $\mathbb{H}^n$, or $\mathbb{S}^n$. 
\end{corollary}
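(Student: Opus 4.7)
The strategy is to observe that the computation carried out in the proof of Theorem~\ref{ThmA}, from the line following the Ricci identity down through equation~\eqref{plkl10}, is entirely pointwise. At no stage does one invoke integration by parts, the divergence theorem, or any global geometric hypothesis on $M^n$. The only ingredients are the $V$-static equation \eqref{eqdefsec2aaa} and its trace \eqref{eqlap}, the parallel Ricci assumption $\nabla R_{jk} = 0$, the Ricci commutation formula, and the second Bianchi identity in the form \eqref{secBia} (which under the parallel Ricci hypothesis gives $\mathrm{div}\,Rm = 0$). All of these identities continue to hold, unchanged, at every point of a compact manifold with smooth boundary.

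Therefore, reproducing that derivation verbatim on $(M^n, g, f)$, we obtain at every $x \in M^n$
\[
\frac{\kappa n}{n-1}\left(|Ric|^2 - \frac{R^2}{n}\right)(x) \;=\; 0.
\]
Since $\kappa \neq 0$, the traceless Ricci tensor $\mathring{Ric}$ vanishes identically on $M^n$, so $(M^n, g)$ is Einstein.

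With the Einstein property in hand, the problem reduces to the classification of Einstein $V$-static metrics with $\kappa \neq 0$ on compact, connected manifolds with smooth boundary. This is precisely the content of Miao--Tam's Theorem~1.1 in \cite{miao2011einstein}, which forces $(M^n, g)$ to be isometric to a geodesic ball in one of the simply connected space forms $\mathbb{R}^n$, $\mathbb{H}^n$, or $\mathbb{S}^n$.

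The main point here is not an obstacle but the key observation: the argument for Theorem~\ref{ThmA} is purely local, so it transfers to the compact-with-boundary setting without any modification and without requiring the boundary integral estimates employed in the original proof of \cite[Corollary~1]{baltazar2017critical}. This gives a self-contained and noticeably more direct route to the classification.
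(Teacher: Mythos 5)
Your proposal is correct and coincides with the paper's own argument: the authors likewise observe that the derivation in the proof of Theorem~\ref{ThmA} up to equation~(\ref{plkl10}) is purely pointwise (no integration is used), conclude that $(M^n,\,g)$ is Einstein, and then invoke \cite[Theorem 1.1]{miao2011einstein}. No discrepancies to report.
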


\begin{proof}
By following the same steps as in the proof of Theorem~\ref{ThmA} up to equation~(\ref{plkl10}), we conclude that $(M^n,\,g)$ is an Einstein manifold. It then suffices to apply Theorem 1.1 in \cite{miao2011einstein} to deduce that $(M^n,\,g)$ is isometric to a geodesic ball in a simply connected space form $\mathbb{R}^n$, $\mathbb{H}^n$, or $\mathbb{S}^n$.  
\end{proof}

\section{Bach-flat $V$-static metrics}
\label{proofBandC}

In this section, we discuss results concerning Bach-flat $V$-static metrics. Speci\-fi\-cally, we present the proofs of Theorem~\ref{ThmB}, Corollary~\ref{Cor1}, and Theorem~\ref{ThmC}.

We begin by establishing the following key lemma, which is analogous to \cite[Lemma 5]{barros2015bach} and \cite[Lemma 4.1]{cao2013bach}. However, unlike the case of Ricci solitons, in our setting there is no ``integrability condition" analogous to Hamilton’s equation for Ricci solitons (see Eq. (4.13) in \cite{Book}), which poses an obstacle to determining the asymptotic behavior of the potential function $f.$ Therefore, we shall assume that $f$ is proper.

\begin{lemma}\label{lem5}
    Let $(M^n,\,g,\,f),$ $n\geq 4,$ be an $n$-dimensional $V$-static metric and $f$ be a proper function. Then we have:
    \begin{equation}
        \int_{M}f^2B(\nabla f,\nabla f)\, dV_g = - \frac{1}{2(n-1)}\int_{M}f^2|T|^2\, dV_g,
    \end{equation}
    where $B$ is the Bach tensor.
\end{lemma}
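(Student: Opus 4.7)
The plan is to establish a pointwise divergence identity of the form
\begin{equation*}
    f^{2}\,B(\nabla f,\nabla f) + \frac{1}{2(n-1)}\,f^{2}\,|T|^{2} = \operatorname{div}(X),
\end{equation*}
for an explicit vector field $X$ constructed from the Cotton tensor, the Weyl tensor, and $\nabla f$, and then to integrate this identity over $M$ by exhausting $M$ with the compact sublevel sets $\{|f|\le t\}$, whose compactness is guaranteed by properness of $f$. This parallels the strategy of Lemma~5 in \cite{barros2015bach}, but reorganized so that every correction is packaged inside a single divergence, so that the boundary-free setting is handled by exhaustion rather than by integration against a fixed boundary.

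First I would start from the standard identity (valid for $n\ge 4$)
\begin{equation*}
    (n-2)\,B_{ij} = \nabla^{k}C_{kij} + W_{ikjl}\,R^{kl},
\end{equation*}
which follows by combining the definition \eqref{bach4} of the Bach tensor with the relation \eqref{eqC2} between the Cotton and Weyl tensors. Contracting with $\nabla^{i}f\,\nabla^{j}f$ and multiplying by $f^{2}$, the Cotton divergence term is rearranged using the product rule as
\begin{equation*}
    f^{2}\,\nabla^{k}C_{kij}\,\nabla^{i}f\,\nabla^{j}f
    = \nabla^{k}\!\bigl(f^{2}\,C_{kij}\,\nabla^{i}f\,\nabla^{j}f\bigr)
      - C_{kij}\,\nabla^{k}\!\bigl(f^{2}\,\nabla^{i}f\,\nabla^{j}f\bigr).
\end{equation*}
The contribution $2f\,\nabla^{k}f\,\nabla^{i}f\,\nabla^{j}f$ in the expansion drops out by the antisymmetry $C_{kij}=-C_{ikj}$. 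The remaining Hessian terms are simplified using the $V$-static equation \eqref{eqdefsec2aaa}, which replaces $\nabla^{k}\nabla^{j}f$ by $f\,R^{kj}+(\Delta f+\kappa)\,g^{kj}$; after using that the Cotton tensor is trace-free and antisymmetric in its first two indices, only a term of the form $f\,C_{kij}\,R^{kj}\,\nabla^{i}f$ survives.

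Next I would apply Lemma~\ref{lem2} to rewrite $f\,C_{kij}=T_{kij}+W_{kijl}\,\nabla^{l}f$. By the definition \eqref{eqT} of $T$, together with the antisymmetries of $T$ and $W$ in their first two indices, the $T$-piece assembles into exactly $-\tfrac{n-2}{2(n-1)}\,f^{2}\,|T|^{2}$, while the $W$-piece combines with and ultimately cancels the Weyl--Ricci term $W_{ikjl}\,R^{kl}\,\nabla^{i}f\,\nabla^{j}f$ carried along from the Bach identity, after one further application of Lemma~\ref{lem2}. Collecting everything produces the desired divergence structure. Integrating over $\Omega_{t}=\{|f|\le t\}$ (compact by properness) and applying the divergence theorem, the stated identity follows once the boundary flux can be shown to vanish along a subsequence $t_{k}\to\infty$.

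The main obstacle is showing that the boundary term $\int_{\{|f|=t_{k}\}}\langle X,\nu\rangle\,dS_{g}$ can be made to vanish. This is the step for which properness of $f$ is indispensable: the compactness of the level sets, together with a coarea-type argument applied to the integrable pointwise identity, forces the existence of a sequence of regular values tending to infinity along which the flux vanishes. A secondary, but routine, difficulty is the careful bookkeeping of numerical coefficients through the cancellation; the final coefficient $-\tfrac{1}{2(n-1)}$ emerges from the interplay between the factor $\tfrac{n-1}{n-2}$ in the definition \eqref{eqT} of $T$ and the factor $\tfrac{1}{n-2}$ in the Bach--Cotton identity.
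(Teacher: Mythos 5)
Your computation of the pointwise divergence identity follows essentially the same route as the paper: start from $(n-2)B_{ij}=\nabla^kC_{kij}+R^{kl}W_{ikjl}$, move the Cotton divergence inside a total divergence, use the $V$-static equation and the trace-free/antisymmetry properties of $C$, convert $fC$ into $T+W(\nabla f)$ via Lemma~\ref{lem2} so that the Weyl--Ricci terms cancel and the $T$-contraction produces $-\tfrac{n-2}{2(n-1)}f^2|T|^2$, and the coefficient bookkeeping you describe is correct. The one genuine soft spot is your treatment of the boundary flux. You present it as the main obstacle and propose to kill it along a subsequence of levels $t_k\to\infty$ by a coarea-type argument; as stated this is not justified, since extracting such a subsequence would require a priori integrability of the flux density over $M$, which you have not established (there is no Hamilton-type identity here to control the asymptotics of $f$). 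But the argument is unnecessary: the vector field you construct is $X_k\propto f^2C_{kij}\nabla^if\nabla^jf= fT_{kij}\nabla^if\nabla^jf$ (the Weyl part dies since $W$ is skew in its last two indices), and on a level set the outward normal is $\pm\nabla f/|\nabla f|$, so
\begin{equation*}
\langle X,\nu\rangle\;\propto\;T_{kij}\,\nabla^kf\,\nabla^if\,\nabla^jf\;=\;0
\end{equation*}
identically, by the very antisymmetry of $T$ (equivalently of $C$) in its first two indices that you already invoked earlier in the proof. This is exactly how the paper proceeds: properness is used only to guarantee that the sublevel sets $\Omega_r=\{|f|\le r\}$ are compact and exhaust $M$, and the flux vanishes on every regular level, so no subsequence is needed. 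With that correction your proof closes completely.
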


\begin{proof} First of all, we combine (\ref{bach4}) and (\ref{eqC2}) to infer
    \begin{equation}
        B_{ij} = \frac{1}{n-2}\left(\nabla_k C_{kij} + R_{kl}W_{ikjl}\right).
    \end{equation} Consequently,
    \begin{align*}
        f^2B_{ij} &= \frac{1}{n-2}\Big(f^2\nabla_kC_{kij} + f^2R_{kl}W_{ikjl}\Big)\\
        &=\frac{1}{n-2}\Big(\nabla_k(f^2C_{kij}) - 2fC_{kij}\nabla_k
f + f^2R_{kl}W_{ikjl}\Big).
\end{align*}

Now, using Lemma \ref{lem2} and (\ref{eqC2}) one obtains that
\begin{align*}
    f^2B_{ij} &= \frac{1}{n-2}\Big(\nabla_k[f(W_{kijl}\nabla_l f +T_{kij})] - 2fC_{kij}\nabla_kf + f^2R_{kl}W_{ikjl}\Big)\\
    &=\frac{1}{n-2}\Big(\nabla_k(fT_{kij})+f(\nabla_kW_{kijl})\nabla_lf + fW_{kijl}\nabla_k\nabla_lf\\
    &\quad+W_{kijl}\nabla_kf\nabla_lf - 2fC_{kij}\nabla_kf + f^2R_{kl}W_{ikjl}\Big)\\
    &=\frac{1}{n-2}\Big(\nabla_k(fT_{kij}) + \frac{n-3}{n-2}fC_{jki}\nabla_kf+f(\nabla_k\nabla_lf-fR_{kl})W_{kijl}\\
    &\quad+W_{kijl}\nabla_kf\nabla_lf + 2fC_{ikj}\nabla_kf\Big).
\end{align*} Using (\ref{eqdefsec2aaa}) and the fact that the Weyl tensor is trace-free in any pair of indices, we obtain

\begin{equation*}
    f^2B_{ij} = \frac{1}{n-2}\Big(\nabla_k(fT_{kij}) + \frac{n-3}{n-2}fC_{jki}\nabla_kf+2fC_{ikj}\nabla_kf + W_{kijl}\nabla_kf\nabla_lf\Big).
\end{equation*} Hence, 

\begin{equation}
\label{plk7859}
    f^2 B(\nabla f,\nabla f) = \frac{1}{n-2}\nabla_k(fT_{kij})\nabla_if\nabla_jf.
\end{equation}

Proceeding, we analyze the right hand side of (\ref{plk7859}). Observe that

\begin{align*}
    \nabla_k(fT_{kij})\nabla_if\nabla_jf &= \nabla_k(fT_{kij}\nabla_if\nabla_jf) - fT_{kij}\nabla_k\nabla_if\nabla_jf - fT_{kij}\nabla_if\nabla_k\nabla_jf\\
    &=\nabla_k(fT_{kij}\nabla_if\nabla_jf)-fT_{kij}(fR_{ki}+ (\kappa+\Delta f)g_{ik})\nabla_jf\\
    &\quad-fT_{kij}\nabla_if(fR_{kj} + (\kappa + \Delta f)g_{kj})\\
    &=\nabla_k(fT_{kij}\nabla_if\nabla_jf) - f^2T_{kij}R_{ki}\nabla_jf
 - f^2T_{kij}R_{kj}\nabla_if,
 \end{align*} where we have used (\ref{eqdefsec2aaa}) and the fact that $T$ is trace-free. Rearranging the indices and using the antisymmetry of $T$, we get
 \begin{equation*}
    (n-2)f^2B(\nabla f,\nabla f) = \nabla_k(fT_{kij}\nabla_if\nabla_jf) - \frac{1}{2}f^2T_{kij}(R_{kj}\nabla_if-R_{ik}\nabla_jf).
\end{equation*} So, it follows from (\ref{eqT}) that
\begin{equation}
\label{43}
    (n-2)f^2B(\nabla f,\nabla f) = \nabla_k(fT_{kij}\nabla_if\nabla_jf) -\frac{n-2}{2(n-1)}f^2|T|^2.
\end{equation}

Notice that if $M^n$ is compact, the result follows immediately from the divergence theorem. Otherwise, if $M^n$ is not compact, we consider the set $$\Omega_r = \{p\in M:\,\, |f(p)| \leq r\}.$$ Taking into account that $f$ is a proper function, we apply the divergence theorem for $\Omega_r$ in order to infer

\begin{equation*}
    \int_{\Omega_r}\nabla_k(fT_{kij}\nabla_if\nabla_jf) \,d\Omega = \int_{\partial\Omega_r}fT_{kij}\nabla_if\nabla_jfN_k\,dS,
\end{equation*}
where $N$ is the unitary normal vector field over $\partial\Omega_r$, i.e., $N = \frac{\nabla f}{|\nabla f|}$ or $N = -\frac{\nabla f}{|\nabla f|}$. But, using again (\ref{eqT}), one concludes that

\begin{equation*}
    \int_{\partial\Omega_r}fT_{kij}\nabla_if\nabla_jfN_k \,dS= \pm\int_{\partial\Omega_r}fT_{kij}\nabla_if\nabla_jf\frac{\nabla_kf}{|\nabla f|}\,dS = 0. 
\end{equation*} Therefore, on integrating (\ref{43}) over $\Omega_{r}$ and using these facts, we obtain

\begin{equation}
 \int_{\Omega_r}f^2B(\nabla f,\nabla f)\,d\Omega = - \frac{1}{2(n-1)}\int_{\Omega_r}f^2|T|^2\, d\Omega.
\end{equation}

Finally, since $f$ is a proper function, $\Omega_r$ is an exhaustion of $M^n$ and hence, letting $r\to \infty,$ we get
\begin{equation}
    \int_{M}f^2B(\nabla f,\nabla f) dV_{g}= - \frac{1}{2(n-1)}\int_{M}f^2|T|^2\,dV_{g},
\end{equation} as asserted.
\end{proof}

Before proceeding, we need to establish an additional structural property of $V$-static metrics. The following proposition, inspired by \cite[Lemma 4]{barros2015bach} and \cite[Theorem 1.2]{catino2016geometry}, highlights a key geometric implication of the condition $T \equiv 0$ in the study of $V$-static metrics.

 \begin{proposition}\label{Prop2}
    Let $(M^n,\,g,\,f),$ $n \ge3,$ be a complete, noncompact $V$-static metric with $\kappa\neq0.$ Suppose that the tensor $T$ vanishes identically. Then, in a neighborhood of every regular level set of $f$, the manifold $(M^n,\,g)$ is locally a warped product with $(n-1)$-dimensional Einstein fibers.
\end{proposition}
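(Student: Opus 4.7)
The plan is to exploit the structural consequences of $T \equiv 0$ collected in Proposition~\ref{Prop1} to realize the metric as a warped product in a neighborhood of any regular level set.

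First, fix a regular value $c$ of $f$ and let $\Sigma_c$ be the corresponding level set. By the regular value theorem, there is an open neighborhood $U$ of $\Sigma_c$ foliated by the smooth hypersurfaces $\Sigma_s = f^{-1}(s)$ for $s$ near $c$. Set $\nu = \nabla f / |\nabla f|$ and let $t$ be the signed arc-length parameter along integral curves of $\nu$, chosen so that $t = 0$ on $\Sigma_c$. Then $\partial_t = \nu$ and the flow of $\nu$ identifies $U$ diffeomorphically with $(-\varepsilon,\varepsilon)\times \Sigma_c$. By item (2) of Proposition~\ref{Prop1}, $|\nabla f|$ is constant along each $\Sigma_s$, hence $|\nabla f|$ is a function of $t$ alone, which in turn forces $f$ to depend only on $t$, with $f'(t) = |\nabla f|$.

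Next, write the induced metric on the slices as $g_t$, so $g = dt^2 + g_t$ in the product coordinates. The second fundamental form of $\Sigma_t$ with respect to $\nu$ is $h = \tfrac{1}{2}\mathcal{L}_{\partial_t} g_t$. By items (1) and (4) of Proposition~\ref{Prop1}, $h_{\alpha\beta} = \tfrac{H(t)}{n-1}\, (g_t)_{\alpha\beta}$ with $H$ depending only on $t$. Thus
\begin{equation*}
\mathcal{L}_{\partial_t} g_t \;=\; \frac{2H(t)}{n-1}\, g_t,
\end{equation*}
which is a linear ODE (with parameter in the leaf directions) whose solution is $g_t = \varphi(t)^2 g_0$, where $g_0 := g_{t=0}$ is the induced metric on $\Sigma_c$ and $\varphi$ solves $\varphi'/\varphi = H/(n-1)$ with $\varphi(0) = 1$. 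Hence $g = dt^2 + \varphi(t)^2 g_0$ on $U$, which is exactly the desired local warped product structure.

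It remains to verify that $(\Sigma_c, g_0)$ is Einstein. For a warped product of this form, the standard Ricci formula gives, for vectors $X,Y$ tangent to the fibers,
\begin{equation*}
Ric(X,Y) \;=\; Ric_{g_0}(X,Y) \;-\; \bigl[\varphi\varphi'' + (n-2)(\varphi')^2\bigr]\, g_0(X,Y).
\end{equation*}
On the other hand, by items (3) and (5) of Proposition~\ref{Prop1}, $e_1 = \nu$ is an eigendirection of $Ric$ and $Ric$ restricted to the tangent space of $\Sigma_t$ is a scalar multiple of $g_t = \varphi(t)^2 g_0$. Combining these two facts yields
\begin{equation*}
Ric_{g_0}(X,Y) \;=\; \mu(t)\, g_0(X,Y)
\end{equation*}
for some function $\mu(t)$. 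Since the left-hand side is independent of $t$, the coefficient $\mu(t)$ must be a constant $\lambda$, and $(\Sigma_c, g_0)$ is Einstein with $Ric_{g_0} = \lambda g_0$.

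The argument is structurally straightforward once Proposition~\ref{Prop1} is available; the main subtlety is organizational rather than analytical, namely ensuring that the constancy of $|\nabla f|$, the umbilicity, and the constancy of the mean curvature on each leaf are combined correctly so that the flow of $\nu$ produces a genuine product chart in which the ODE $\mathcal{L}_{\partial_t} g_t \propto g_t$ integrates cleanly. The Einstein condition on the fibers is then forced by the Ricci eigenvalue structure in item (5), together with the $t$-independence of $Ric_{g_0}$.
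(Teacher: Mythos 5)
Your construction of the warped-product structure is essentially the paper's argument: constancy of $|\nabla f|$ on the leaves (Proposition~\ref{Prop1}(2)) gives the product chart in the signed-distance coordinate, and umbilicity plus constant mean curvature (items (1) and (4)) give the ODE $\partial_t g_t \propto g_t$, which integrates to $g = dt^2 + \varphi(t)^2 g_0$. Where you diverge is the Einstein condition on the fiber: the paper simply invokes Miao--Tam's Proposition~3.1(i) for $V$-static warped products, whereas you derive it from the warped-product Ricci formula combined with the eigenvalue structure of $Ric$ from item (5). Your route is more self-contained, and for $n\ge 4$ it closes cleanly even if item (5) is read only pointwise, since $Ric_{g_0}=\mu\, g_0$ with $\mu$ a function on a fiber of dimension $n-1\ge 3$ forces $\mu$ constant by Schur's lemma.

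The one step you should justify rather than assert is that $\mu$ depends only on $t$. Item (5) as stated gives, at each point, that $Ric$ restricted to $T\Sigma_t$ is proportional to $g_t$; it does not by itself say the proportionality factor is constant along the leaf. Your $t$-independence argument then only yields a \emph{pointwise} Einstein condition $Ric_{g_0}=\mu(\theta)g_0$, which is vacuous when $n=3$ (every surface satisfies it), so for $n=3$ the argument as written does not give constant curvature of the fiber. The fix is short: for $X,Y$ tangent to a leaf, the $V$-static equation gives $|\nabla f|\,h(X,Y)=f\,Ric(X,Y)+(\Delta f+\kappa)g(X,Y)$, and since $|\nabla f|$, $H$, $f$, $R$ (hence $\Delta f$) are all constant on each leaf by items (1), (2), (4) and the constant scalar curvature of $V$-static metrics, the tangential Ricci eigenvalue is constant on each leaf wherever $f\neq 0$, and by density/analyticity everywhere. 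With that inserted, your conclusion $\mu=\mu(t)$, hence $\mu\equiv\lambda$, holds for all $n\ge 3$.
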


\begin{proof}
Since $\Sigma_c = \{p\in M; f(p) = c\}$ is the regular level set of the potential function $f$ and $T=0$ on $M,$ assertion (2) of Proposition \ref{Prop1} ensures that $|\nabla f|$ is constant along $\Sigma_c$. Consequently, in a neighborhood $U$ of $\Sigma_c$ containing no critical points of $f,$ the function $f$ depends only on the signed distance $r$ to the hypersurface $\Sigma_c.$ Therefore, after a suitable change of variables, the metric  $g$ can be locally expressed in the form

    \begin{equation*}
        g = dr^2 + g_{\alpha\beta}(r,\theta)d\theta^\alpha \otimes d\theta^\beta,
    \end{equation*}
    where $\{\theta^2,\dots,\theta^n\}$ is a local coordinate system on $\Sigma_c,$ and $r$ is defined on a maximal interval $r \in (r_{min},r_{max})$ with $r_{min} \in [-\infty,0)$ and $r_{max}\in(0,\infty]$.

     Recall that Proposition \ref{Prop1} asserts that $\Sigma_c$ is a totally umbilical hypersurface with constant mean curvature. Hence, we have
    \begin{equation*}
        \partial_rg_{\alpha\beta} = -2h_{\alpha\beta} = \phi(r)g_{\alpha\beta},
    \end{equation*} with $\phi(r) = -\frac{2}{n-1}H(r)$ and $\partial_{r}=\frac{\nabla f}{|\nabla f|};$ see also Eq. (2.24) in \cite[]{barros2015bach}. From this, it follows that 
    \begin{equation*}
        g_{\alpha\beta}(r,\theta) = e^{\Phi(r)}g_{\alpha\beta}(0,\theta),\,\,\,\hbox{where}\,\,\, \Phi(r) = \int_{0}^r\phi(r)dr.
    \end{equation*} Therefore, in $U$, the metric $g$ takes the form of a warped product
    \begin{equation*}
       g = dr^2 + \varphi(r)^2g_{\Sigma_c},
    \end{equation*}
    where $\varphi$ is a smooth function on $U$, $r \in (r_{min},r_{max})$ and $g_{\Sigma_c}$ is the metric of the regular level set $\Sigma_c.$ To conclude, since $(U,g,\bar{f)}$, where $\bar{f} = f|_{U}$, is a $V$-static warped metric, it suffices to apply \cite[Proposition 3.1 (i)]{miao2011einstein} to deduce that the fiber is Einstein $Ric_{\Sigma_C}=(n-2)k_{0}\,g_{\Sigma_C},$ where $k_0$ is constant; see also the proof of Lemma 4 in \cite{barros2015bach}. This completes the proof of the proposition.
   
\end{proof}

We are now ready to prove {\bf Theorem~\ref{ThmB}}, which follows as a special case of the following more general result.

\begin{theorem}\label{ThmD}
    Let $(M^n,\,g,\,f),$ $n\geq 4,$ be a complete, simply connected $V$-static metric with $\kappa\neq 0$ and $f$ be a proper function. If $B(\nabla f,\nabla f)=0$, then $(M^n,\,g)$ is isometric to either 
    
\begin{enumerate}
\item the standard sphere $\mathbb{S}^n,$ or 
\item the Euclidean space $\mathbb{R}^n,$ or 
\item the hyperbolic space $\mathbb{H}^n$ or
\item $\mathbb{R}\times_{\varphi}\Sigma_c,$
\end{enumerate} where $\Sigma_c$ is a regular level set of the potential function, which is an Einstein mani\-fold with $Ric_{\Sigma_c}=\lambda g_{\Sigma_c},$ and the warping function $\varphi$ is a solution of the ODE
\begin{equation}
\label{ODEgeral1}
     \varphi\Big(\frac{R}{n-1}\varphi + 2\varphi''\Big) + (n-2)(\varphi')^2 = \lambda.
\end{equation}
\end{theorem}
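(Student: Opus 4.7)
The plan is to reduce the Bach-type hypothesis $B(\nabla f,\nabla f)=0$ to the vanishing of the auxiliary tensor $T$ via Lemma~\ref{lem5}, then invoke Proposition~\ref{Prop2} to obtain a local warped product structure around every regular level set of $f$, and finally globalize the resulting splitting.

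First I would apply Lemma~\ref{lem5}. Since $f$ is proper and $B(\nabla f,\nabla f)$ vanishes identically on $M^n$, the left-hand side of the integral identity is zero, forcing
\[
\int_M f^2|T|^2\, dV_g = 0.
\]
Because $f$ and $g$ are real-analytic in suitable coordinates and $f$ is non-constant (as $\kappa\neq 0$), the zero locus $\{f=0\}$ has empty interior, so the non-negative integrand $f^2|T|^2$ vanishes identically; continuity then upgrades this to $T\equiv 0$ on all of $M^n$. With $T\equiv 0$ in hand, Proposition~\ref{Prop1} yields that every regular level set $\Sigma_c = f^{-1}(c)$ is totally umbilic with $|\nabla f|$ constant along it and $\nabla f$ a Ricci eigendirection, while Proposition~\ref{Prop2} provides the local representation $g = dr^2 + \varphi(r)^2 g_{\Sigma_c}$ with $(\Sigma_c,g_{\Sigma_c})$ Einstein, say $\mathrm{Ric}_{\Sigma_c} = \lambda g_{\Sigma_c}$. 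Plugging this warped product ansatz into the $V$-static equation~(\ref{eqdefsec2aaa}) and using that $(M^n,g)$ has constant scalar curvature, a direct computation produces the ODE~(\ref{ODEgeral1}) for $\varphi$.

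The remaining and most delicate step is to extend this local picture to all of $M^n$, following the strategy of Cao--Yu, Cao--Chen, and Fern\'andez-L\'opez and Garc\'ia-R\'io in the Ricci soliton setting. On the connected open set $\Omega\subset M^n$ of regular points of $f$ the warped structure is already available. If $f$ has no critical points, then $\Omega = M^n$; properness of $f$ combined with simple connectedness forces the level sets to be mutually diffeomorphic and connected, so the local warped products patch into a global $\mathbb{R}\times_\varphi\Sigma_c$, which is case~(4). Otherwise, the $V$-static equation at a critical point $p$ shows that the critical set of $f$ is discrete and non-degenerate; analyzing the behavior of $\varphi(r)$ as the fibers collapse onto $p$, smoothness of $g$ forces $(\Sigma_c,g_{\Sigma_c})$ to be a round sphere, and the ODE~(\ref{ODEgeral1}) is then classified to yield a space form, identifying $(M^n,g)$ with $\mathbb{S}^n$, $\mathbb{R}^n$, or $\mathbb{H}^n$ by simple connectedness. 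The main obstacle is precisely this smooth extension across critical points: the boundary-based arguments of~\cite{barros2015bach} are unavailable here, and the properness of $f$ is what substitutes for them by pinning down the asymptotic behavior of $\varphi$ and ensuring that the exhaustion $\Omega_r=\{|f|\le r\}$ in Lemma~\ref{lem5} is well-defined.
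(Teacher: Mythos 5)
Your overall strategy coincides with the paper's: Lemma~\ref{lem5} together with properness of $f$ forces $T\equiv 0$ (the analyticity argument you give for passing from $f^2|T|^2=0$ to $T\equiv 0$ is fine), Proposition~\ref{Prop2} then yields the local warped product over an Einstein fiber, the ODE~\eqref{ODEgeral1} is just the constant scalar curvature condition for such a warped product, and the real work is the globalization. Up to that point your argument matches the paper.

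The gap is in the globalization step. You assert that ``the $V$-static equation at a critical point $p$ shows that the critical set of $f$ is discrete and non-degenerate,'' and you organize the entire case analysis around whether $f$ has critical points. Neither claim is justified. At a critical point, \eqref{eqdefsec2aaa} and \eqref{eqlap} give $\nabla^2 f = f\big(Ric-\tfrac{R}{n-1}g\big)-\tfrac{\kappa}{n-1}g$, which need not be definite unless $f$ vanishes there, so non-degeneracy is not automatic; and nothing rules out a priori that a critical level set of $f$ is an entire hypersurface rather than a point. The paper avoids this by keying the trichotomy to the zeros of the \emph{warping function} $\varphi$ rather than to the critical points of $f$: its Claim~1 shows that the warped structure extends across an interior critical level set of $f$ as long as that level set is still a (totally umbilical) hypersurface, using density of the regular set, continuity and analyticity, so that the maximal interval $I_{\max}$ terminates only where $\varphi\to 0$, i.e.\ where a fiber collapses to a point. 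This produces exactly the three cases $\mathbb{R}\times_{\varphi}\Sigma_c$, $[0,\infty)\times_{\varphi}\Sigma_c$ and $[0,a]\times_{\varphi}\Sigma_c$; the identification of $\Sigma_c$ with the round sphere at a collapsing end (via normal polar coordinates and the expansion $\varphi(r)=r+ar^3+O(r^5)$) and the explicit integration of the resulting initial value problem with $\varphi(0)=0$, $\varphi'(0)=1$ then go through as you indicate, and also serve to exclude the sign combinations of $R$ incompatible with the number of zeros of $\varphi$. As written, your plan is missing precisely this pivot from ``critical points of $f$'' to ``zeros of $\varphi$,'' which is where the density of regular points and the smooth extension argument are actually used.
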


\begin{proof}
By analyticity \cite[Proposition 2.1]{corv2013def}, it suffices to prove the result on $\Omega = \{p \in M:\,|\nabla f| \neq 0\},$ which exists by Sard's Theorem and the fact that $f$ is nontrivial. In the first part of the proof, we shall adapt some ideas outlined in \cite{catino,CaoYu}. First, pick a regular value $c$ of the potential function $f$ and let $\Sigma_c$ be the level surface, that is, $\Sigma_c =f^{-1}(c)$. Let $I \in \mathbb{R}$ be an open interval containing $c$ such that $f$ has no critical points in the open neighborhood $U_I =f^{-1}(I)$ of $\Sigma_c$. Since $(M^n,\,g)$ satisfies $B(\nabla f,\nabla f)=0$ and $f$ is proper, Lemma \ref{lem5} guarantees that $(M^n,\,g)$ has vanishing $T$ tensor\footnote{Obviously, the result also holds if $(M^n,\,g)$ is Bach-flat.}. Therefore, Proposition \ref{Prop2} guarantees that $U_I$ admits a local warped product structure 
    \begin{equation}\label{eqwarp}
        g= dr^2 + \varphi^2(r)g_{\Sigma_c},
    \end{equation}
    where
    \begin{equation*}
        r(x) = \int_{f=c}^{f}\frac{df}{|\nabla f|}
    \end{equation*}
  is the signed distance from the regular hypersurface $\Sigma_c$, which is a totally umbilical Einstein hypersurface. In particular, since $M^n$ is simply connected, then $\Sigma_c$ is a two-sided embedded hypersurface.

In the second part of the proof, we aim to extend the local warped product structure to a global one. To this end, following~\cite{CaoYu} and \cite{lopez2014note}, we divide the argument into a sequence of claims.\\
   
{\bf Claim 1.} The interval in which the warped structure is defined can be extended as long as the warping function does not vanish.\\

Let $I_{\max}\supseteq I$ be the maximal interval for which the warped structure (\ref{eqwarp}) holds. Observe that if $|\nabla f| $ never vanishes, the claim follows immediately. Otherwise, assume that $|\nabla f|(\bar{r})=0$ for some $\bar{r},$ so that either $(\bar{r}-\varepsilon,\bar{r})\subset I_{\max}$ or $(\bar{r},\bar{r}+\varepsilon)\subset I_{\max}$, that is, $\bar{r}$ is an endpoint of the interval $I_{\max}$. Thus, since $\Omega$ is dense, by continuity and smoothness, $\Sigma_{\bar{c}} =f^{-1}(\bar{c}) =r^{-1}(\bar{r})$ is a totally umbilical submanifold and the warped structure can be extended to $(\bar{r}-\varepsilon,\bar{r}+\varepsilon)\subset I_{\max}$ for $\varepsilon >0$ sufficient small.

We now have three cases: either $\varphi$ never vanishes, in which case $I_{\max} = \mathbb{R}$; or it vanishes at exactly one point, implying that $I_{\max} = (a,\infty)$ with $\lim_{r\rightarrow a}\varphi(r) = 0 $; or it vanishes at precisely two points, so that $I_{\max}=(a,b)$ and $\lim_{r\rightarrow a}\varphi(r) = \lim_{r\rightarrow b}\varphi(r)=0$. Therefore, by a suitable shifting in $r,$ one sees that $(M^n,\,g)$ is isometric to one of the following possibilities: 

    \begin{enumerate}
       \item[(i)] $\mathbb{R}\times_{\varphi}\Sigma_c$; or
        \item[(ii)]$[0,\infty)\times_{\varphi}\Sigma_c$; or
        \item[(iii)] $[0,a]\times_{\varphi}\Sigma_c$. 
    \end{enumerate}
    
    \vspace{0.20cm}
    
    {\bf Claim 2.} If $(M^n,\,g)$ is isometric to either $[0,\infty)\times_{\varphi}\Sigma_c$ or $[0,a]\times_{\varphi}\Sigma_c$, then $\Sigma_c$ is isometric to $\mathbb{S}^{n-1}$.\\

We address here the first case; the second is analogous. Suppose that $(M^n,\,g)$ is isometric to $[0,\infty)\times_{\varphi}\Sigma_c,$ where the warped product metric extends smoothly to $\{r=0\}$. Since $\varphi(0)=0$, smoothness of the metric requires that the slice $\{0\}\times\Sigma_c$ collapses to a single point, denoted by $p \in M$. Consequently, the exponential map at $p$ ensures that $\Sigma_c$ is diffeomorphic to $\mathbb{S}^{n-1}.$ Moreover, in a neighborhood of $p,$ the metric can be expressed in polar coordinates as
    \begin{equation}
    \label{eqpolarA}
        g = dr^2 + r^2h_r(\theta),
    \end{equation}
   where $h_r$ is a smooth one-parameter family of metrics on $\mathbb{S}^{n-1}$, and $h_0$ is the standard round metric. So, taking into account (\ref{eqwarp}), we now need to show that $h_{0}=g_{\Sigma_{c}}.$ Indeed, following \cite[Claim 4]{catino} (see also \cite[Lemma 9.114]{besse2007einstein}), the Taylor expansion of $\varphi$ has the form 
   
      \begin{equation*}
       \varphi(r) = r + ar^3 + O(r^5),
   \end{equation*} so that
      \begin{equation}
       \varphi^2(r) = r^2 + 2ar^4 + O(r^6).
   \end{equation} In conjunction with (\ref{eqpolarA}) and (\ref{eqwarp}) yields

   \begin{equation*}
       \varphi^2(r)g_{\Sigma_c} = r^2h_r(\theta).
   \end{equation*}
   Consequently,
   \begin{equation*}
       r^2g_{\Sigma_c} + 2ar^2g_{\Sigma_c} + O(r^6)g_{\Sigma_c} = r^2h_r(\theta).
   \end{equation*} Dividing the above equation by $r^2$ for $r>0$ and letting $r\to 0,$ one sees that
      \begin{equation*}
       h_0 = \lim_{r\rightarrow0}h_r = g_{\Sigma_c}.
   \end{equation*} Therefore, the metric on $\Sigma_c$ must be the round metric, and hence $\Sigma_c$ is isometric to $\mathbb{S}^{n-1},$ which completes the proof of Claim 2.\\

To proceed, observe that if a manifold $(M^n,\,g)$ is expressed as a warped product of an interval and an Einstein manifold, then the following ODE holds
    \begin{equation}\label{eqODEgeral2}
        R = \frac{n-1}{\varphi^2}(\lambda - 2\varphi\varphi'') - (n-1)(n-2)\Big(\frac{\varphi'}{\varphi}\Big)^2,
    \end{equation} where $\lambda$ is the Einstein constant and $\varphi$ denotes the warping function. We now distinguish three cases, which will be analyzed separately.\\

\textbf{\underline{Case 1:}} $(M^n,\,g)$ is isometric to $[0,a]\times_{\varphi}\mathbb{S}^{n-1}$.\\
     
    In this case, we have $\lambda = n-2$ and (\ref{eqODEgeral2}) becomes
    \begin{equation}\label{ODEsimple}
        \frac{R}{n-1}\varphi^2 = (n-2)(1-(\varphi')^2)-2\varphi\varphi'',
    \end{equation}
    with $\varphi(0)=0$. In particular, for the metric to extend smoothly, we have $\varphi'(0)=1$. We then solve this initial value problem, which depends only on the sign of the scalar curvature $R$ as follows.

    \begin{enumerate}
        \item[(i)] If $R>0$, then
             $ \varphi(r) = \sqrt{\frac{n(n-1)}{R}}\sin\Bigg(\sqrt{\frac{R}{n(n-1)}}r \Bigg)$
             and $[0,a] = \Big[0,\sqrt{\frac{n(n-1)}{R}}\pi\Big]$. Hence, $(M^n,\,g)$ is isometric to the standard sphere $\mathbb{S}^n$.
             \item[(ii)] If $R=0$, then $\varphi(r) = r.$ As observed in the proof of Claim~1, for such an isometry to occur, the warping function would have to vanish at exactly two points; therefore, this case cannot arise.
             \item[(iii)] If $R<0$, then $ \varphi(r) = \sqrt{-\frac{n(n-1)}{R}}\sinh\Bigg(\sqrt{-\frac{R}{n(n-1)}}r \Bigg).$ As in case (ii), this situation cannot occur since $\varphi$ vanishes at only one point.\\
    \end{enumerate}

\textbf{\underline{Case 2:}} $(M^n,\,g)$ is isometric to $[0,\infty)\times_{\varphi}\mathbb{S}^{n-1}$.\\

Similar to the previous case, the initial data are given by
 $\varphi(0)=0$ and $\varphi'(0) =  1,$ and the corresponding ODE coincides with (\ref{ODEsimple}). Therefore, the possible solutions $\varphi(r)$ are the same as in Case 1.
\begin{enumerate}
    \item[(i)] If $R>0$, one observes that $\varphi$ vanishes at two points. However, from the proof of Claim~1, we know that in the isometric case the warping function must vanish at exactly one point. Therefore, the case $R>0$ cannot occur.
     \item[(ii)] If $R=0$, one deduces that $\varphi(r) = r.$ Therefore, $(M^n,\,g) $ is isometric to the Euclidean space $\mathbb{R}^n.$
         \item[(iii)] If $R<0$, then $ \varphi(r) = \sqrt{-\frac{n(n-1)}{R}}\sinh\Bigg(\sqrt{-\frac{R}{n(n-1)}}r \Bigg).$ Thus, $(M^n,\,g)$ is isometric to the hyperbolic space $\mathbb{H}^n$.
\end{enumerate}

\textbf{\underline{Case 3}:} $(M^n,\,g)$ is isometric to $\mathbb{R}\times_{\varphi}\Sigma_c$.\\

In this case, it follows from (\ref{eqODEgeral2}) that
\begin{equation}
\label{eqJNK1}
     \varphi\Big(\frac{R}{n-1}\varphi + 2\varphi''\Big) + (n-2)(\varphi')^2 = \lambda.
\end{equation}

This analysis covers all possible cases, and the theorem is thus proved.
\end{proof}


\begin{remark}
We note that $\varphi$ satisfying $$(\varphi')^2=\frac{\lambda}{n-2}-\frac{R}{n-1}\varphi^2$$ is a solution of (\ref{eqJNK1}). In particular, differentiating this identity gives $\varphi'' =-\frac{R}{n-1}\varphi,$ which provides a direct solution of (\ref{eqJNK1}). However, since no initial data are available in Case~3, we cannot assert the uniqueness of such a solution. Nevertheless, we have the following:

\begin{itemize}
\item If $R\geq 0$ and $\lambda\leq 0,$ then (\ref{eqJNK1}) implies $\varphi''\leq 0.$ Because $\varphi$ is a positive and weakly concave function on $\Bbb{R},$ it must be constant. Substituting this into (\ref{eqJNK1}) then forces $R=\lambda=0.$ Consequently, by \cite[Theorem 2.2]{miao2011einstein}, $M^n$ is isometric to the Euclidean space $\mathbb{R}^n.$

\end{itemize} 
\end{remark}


We now turn our attention to the lower dimensional cases.

\begin{proof}[{\bf Proof of Corollary \ref{Cor1}}]

First, notice that items (1)–(3) of Corollary~\ref{Cor1} follow immediately from Theorem~\ref{ThmB}, so we are left to analyze the last case. Since the function $\varphi$ is not determined by an initial value problem, we are unable to explicitly solve it; however, we can still make progress in understanding the structure of $\Sigma_c$.

Observe that, under the assumption that $M^4$ is simply connected, the level set $\Sigma_c$  is necessarily simply connected as well. Therefore, $\Sigma_c$ is a compact, simply connected, $3$-dimensional Einstein manifold. Thus, it follows from the Killing--Hopf theorem that $\Sigma_c$ is isometric to $\mathbb{S}^3.$ Finally, by substituting $n=4$ and $\lambda = 2$ into the general ODE~(\ref{ODEgeral1}), we obtain
\begin{equation*}
    \varphi\left(\varphi'' + \frac{R}{6}\varphi\right) + (\varphi')^2 =1,
\end{equation*}
which characterizes the warping function $\varphi,$ and completes the proof.
\end{proof}

We now present the proof of Theorem \ref{ThmC}.

\begin{proof}[{\bf Proof of Theorem \ref{ThmC}}] In the first part of the proof, we follow \cite[Theorem 1.2]{catino}. In dimension $3,$ by a standard argument involving the definitions of the Bach and Cotton tensors, we infer

\begin{equation}
\label{eqkjnm560}
 \nabla_i B_{ij}\nabla_jf = -R_{ik}C_{jki}\nabla_{j}f=-\frac{1}{2}\Big(C_{jki}R_{ik}\nabla_jf + C_{kji}R_{ij}\nabla_kf\Big);
 \end{equation} for more details, see \cite[Eq. (4.2)]{catino} and  \cite[Eq. (3.3)]{barros2015bach}.

Plugging (\ref{eqT}) and the fact that $C$ is trace-free into (\ref{eqkjnm560}), one obtains that
\begin{equation*}
    div\,B(\nabla f) = \frac{1}{2}C_{kji}\left( R_{ik}\nabla_{j}f-R_{ij}\nabla_{k}f\right)=\frac{1}{4}C_{kji}T_{kji}.
\end{equation*}
Applying Lemma \ref{lem2} together with the fact that $W=0,$ we deduce
\begin{equation}
    div\,B(\nabla f) = \frac{f}{4}|C|^2.
\end{equation} Thus, assuming that $ div\,B(\nabla f)=0 $ -- in particular, when $M^3$ is Bach-flat -- we conclude from  Lemma \ref{lem2} that $T\equiv 0.$ It then follows from Proposition~\ref{Prop2} that $(M^3,\,g)$ is locally a warped product. Consequently, repeating the arguments from the second part of the proof of Theorem~\ref{ThmB}, we deduce that $M^3$ is isometric to one of the following spaces: the standard sphere $\mathbb{S}^3,$ the Euclidean space $\mathbb{R}^3,$ the hyperbolic space $\mathbb{H}^3,$ or $\mathbb{R}\times_{\varphi}\Sigma_c,$
where $\Sigma_c$ is a regular level set of the potential function and the warping function $\varphi$ satisfies
\begin{equation}
\label{aaaaa}
     \varphi\Big(\frac{R}{2}\varphi + 2\varphi''\Big) + (\varphi')^2 = \lambda.
\end{equation}

In the last case, by \cite[Proposition 3.1 (i)]{miao2011einstein}, we have that $\Sigma_{c}$ is Einstein mani\-fold, i.e., $Ric_{\Sigma_c}=\lambda g_{\Sigma_c}$ with $\lambda$ constant. Therefore, $\Sigma_c$ has constant sectional curvature (see \cite[Proposition 1.120]{besse2007einstein}). By using the Killing--Hopf theorem and the fact that $f$ is proper, one concludes that the level set $\Sigma_c$ is isometric to $\Bbb{S}^2.$ This completes the proof of the theorem.
\end{proof}

\vspace{0.20cm}

\noindent{\bf Conflict of Interest:} There is no conflict of interest to disclose.
	
	\

\noindent{\bf Data Availability:} Not applicable.

\end{document}